\newtheorem{theorem}{\bf Theorem}[section]
\newtheorem{proposition}[theorem]{\bf Proposition}%[section]
\newtheorem{corollary}[theorem]{\bf Corollary}%[section]
\newtheorem{lemma}[theorem]{\bf Lemma}%[section]
\newtheorem{remark}[theorem]{\bf Remark}%[section]
\newtheorem{definition}[theorem]{\bf Definition}%[section]
\newcommand{\overbar}[1]{\mkern 1.5mu\overline{\mkern-1.5mu#1\mkern-1.5mu}\mkern 1.5mu}
\newenvironment{psmallmatrix}
 {\left(\begin{smallmatrix}}
 {\end{smallmatrix}\right)}
\newcommand{\z}{\mathbb Z_+}
\newcommand{\hl}{\mathcal{H}}
\newcommand{{\ran}}{\mbox{\rm ran}~}
\newcommand{\hla}{(\mathcal{H}, K^\alpha)\otimes(\mathcal{H}, K^\beta)}
\newcommand {\D} {\mathbb{D}}
\newcommand\quotient[2]{
        \mathchoice
            {% \displaystyle
                \text{\raise1ex\hbox{$#1$}\Big/\lower1ex\hbox{$#2$}}%
            }{% \textstyle
                #1\,/\,#2
            }
            {% \scriptstyle
                #1\,/\,#2
            }
            {% \scriptscriptstyle  
                #1\,/\,#2
            }
    }
\begin{document}

\title[Gaussian curvature and the curvature of a Cowen-Douglas operator]{The relationship of the Gaussian curvature with the curvature of a Cowen-Douglas operator}
\author[S. Ghara]{Soumitra Ghara}
%\address[S. Ghara]{Fields Institute for Research in Mathematical Sciences, Canada}
\address[]{}
\email[S. Ghara]{ghara90@gmail.com}
\author[G. Misra]{Gadadhar Misra}
\address[G. Misra]{Statistics and Mathematics Unit, Indian Statistical Institute, Bangalore 560059, and Department of Mathematics, Indian Institute of Technology, Gandhinagar 382055}
\email[G. Misra]{gm@isibang.ac.in}

\thanks{Support for the work of S. Ghara was provided by 
SPM Fellowship of the CSIR and a post-doctoral Fellowship of the Fields Institute for Research in Mathematical Sciences, Canada.
Support for the work of G. Misra was provided in the 
%The work of third author is supported by the 
form of the J C Bose National Fellowship, Science and Engineering Research Board.  Some of the results in this paper are
from the PhD thesis of the first named author submitted to the Indian Institute of Science.}

%\thanks{The work of the first named author  was supported by CSIR SPM Fellowship  (Ref. No. SPM-07/079(0242)/2016-EMR-I). The work  of  the  second named  author was supported  by the J  C Bose Fellowship of the DST and CAS II of the UGC. Many of the results in  this paper are from the PhD  thesis of the first named author submitted  to the Indian Institute   of Science in the year 2018.}

\subjclass[2010]{47B32, 47B38}
%Primary 46E20, 46E22; Secondary 47B32, 47B37}
%
%
%
\keywords{Cowen-Douglas class, 
Non negative definite kernels, tensor product,  Hilbert modules.}
\begin{abstract}
It has been recently shown that if $K$ is a sesqui-analytic scalar valued non-negative definite kernel on a  domain $\Omega$ in $\mathbb C^m$, then the function $\big(K^2\partial_i\bar{\partial}_j\log K\big )_{i,j=1}^ m,$ 
is also  a non-negative definite kernel on  $\Omega$. In this paper, we discuss two consequences of this result. The first one strengthens the curvature inequality for operators in the Cowen-Douglas class $B_1(\Omega)$ while the second one  gives a relationship of the reproducing kernel of a submodule of certain Hilbert modules with the curvature of the associated quotient module.
\end{abstract}

 \maketitle
\section{Introduction}
Let $X$ be an arbitrary set and let $K:X\times X\to{\mathcal M}_n(\mathbb{C})$, $n\geq 1$, be a function. We say that $K$  is  a non-negative definite kernel (resp. positive definite kernel) if for any subset $\{x_1,\ldots,x_p\}$ of $X$, 
the $ np \times np $ matrix $ \Big (\!\!\Big(\,K(x_i,x_j)\,\Big)\!\!\Big)_{i,j=1}^{p}$ 
is non-negative definite (resp. positive definite).
A Hilbert space $\mathcal H$ consisting of functions on $X$ is said to be a reproducing kernel Hilbert space with reproducing kernel $K$ if 
\begin{itemize}
\item[\rm (i)]
for each $x \in X $ and $\eta \in {\mathbb{C}}^n$, $ K(\cdot,x)\eta \in \mathcal H $
\item[\rm (ii)] for each $f\in \mathcal{H}$ and $x\in X $, ${\langle f,K(\cdot,x)\eta\rangle}_\mathcal{H}={\langle f(x),\eta\rangle}_{{\mathbb{C}}^n}.$ 
\end{itemize}
The  kernel $K$ of a reproducing kernel Hilbert space $\mathcal H$ is non-negative definite. Conversely, corresponding to each non-negative definite kernel $K$ there exists a unique  reproducing kernel Hilbert space $(\mathcal H, K)$ whose reproducing kernel is $K$ (see \cite{Aro},  \cite{PaulsenRaghupati}). 
For $K:X \times X\to \mathcal M_n(\mathbb{C})$, we write $K\succeq 0$ to denote that $K$ is non-negative definite. Analogously, we write $K\preceq 0$ if $-K$ is non-negative definite. For $K_1, K_2:X \times X\to \mathcal M_n(\mathbb{C})$, we write $K_1\succeq K_2$ to denote that $K_1-K_2\succeq 0$ and we write  $K_1\preceq K_2 $  if $K_1-K_2\preceq 0$. For any domain $\Omega$ in $\mathbb C^m$, $m\geq 1$, a function $K:\Omega\times \Omega\to{\mathcal M}_n(\mathbb{C})$ is said to be sesqui-analytic if it is holomorphic in first $m$-variables and anti-holomorphic in the second set of $m$-variables. In this paper, we will deal with non-negative definite kernels which are sesqui-analytic.

We now discuss an important class of operators introduced by Cowen and Douglas (see \cite{CD}, \cite{Curtosalinas}). %The case of $2$ variables was discussed in \cite{CDopen}, while a detailed study in the general case appeared later in \cite{Curtosalinas}. 
%The definition below is taken from \cite{Curtosalinas}. 
Let 
$\boldsymbol T:=(T_1,...,T_m)$ be a $m$-tuple of commuting bounded linear operators on a separable Hilbert space 
$\mathcal H.$ Let $D_{\boldsymbol T}:\mathcal H\to \mathcal H\oplus\cdots\oplus \mathcal H$
be the operator defined by $D_{\boldsymbol T}(x)=(T_1x,...,T_mx), ~x\in \mathcal H.$ 
\begin{definition}[Cowen-Douglas class operator]
Let $\Omega\subset \mathbb C^m$ be a bounded domain. A commuting $m$-tuple $\boldsymbol T$ on $\mathcal H$ is said to be in the Cowen-Douglas class $B_n(\Omega)$ if $\boldsymbol T$ satisfies the following requirements:
\begin{enumerate}
\item[\rm(i)] 
\rm dim $\ker D_{\boldsymbol T-w} = n,~~w\in\Omega$
\item[\rm(ii)]
$\ran D_{\boldsymbol T-w}$ is closed for all $w\in\Omega$
\item[\rm(iii)]
$\overbar \bigvee \big\{\ker D_{\boldsymbol T-w}: w\in \Omega \big\}=\mathcal H.$ 
\end{enumerate} 
\end{definition}
If $\boldsymbol T\in B_n(\Omega)$, then  for each $w\in \Omega$, there exist functions $\gamma_1,\ldots,\gamma_n$ holomorphic in a neighbourhood $\Omega_0\subseteq \Omega$ containing $w$ such that $\ker D_{\boldsymbol T -w^\prime}=\bigvee\{\gamma_1(w^\prime),\ldots,\gamma_n(w^\prime)\}$ for all $w^\prime \in \Omega_0$ (cf. \cite{CDopen}). Consequently, every $\boldsymbol T\in B_n(\Omega)$ corresponds to a rank $n$
holomorphic hermitian vector bundle $E_{\boldsymbol T}$ defined by 
$$E_{\boldsymbol T}=\{(w,x)\in \Omega\times \mathcal H:x\in \ker D_{\boldsymbol T-w}\}$$ 
and $\pi(w,x)=w$, $(w,x)\in E_{\boldsymbol T}$.
For a bounded domain $\Omega$ in $\mathbb C^m$, let $\Omega^*=\{z:\bar{z}\in \Omega\}.$
It is known that if $T$ is an operator in $B_n(\Omega^*)$,
then  for each $w\in \Omega$, $T$ is unitarily equivalent to the adjoint of the multiplication tuple $M=(M_1,\ldots,M_m)$ on some 
reproducing kernel Hilbert space $(\mathcal H, K)\subseteq {\rm Hol}(\Omega_0, \mathbb C^n)$  for some  open subset  $\Omega_0\subseteq\Omega$ containing $w$.
%Here the kernel $K$ can be described explicitly as follows. 
%Let $\Gamma=\{\gamma_1,\ldots,\gamma_n\}$ be a holomorphic frame of  the vector bundle $E_{\boldsymbol T}$ on a neighbourhood 
%$\Omega_0^*\subseteq \Omega^*$ containing $\bar w$. Define 
%$K_{\Gamma}: \Omega_0\times\Omega_0\to \mathcal M_n(\mathbb C)$ by
%$K_{\Gamma}(z,w)=\big(\left\langle \gamma_j(\bar w),\gamma_i(\bar z)\right\rangle\big)_{i,j=1}^n$, $z,w\in \Omega_0$.  Setting $K=K_\Gamma$, one %may verify that the operator $\boldsymbol T$ is unitarily equivalent to the adjoint of the $m$-tuple of  multiplication
%operators $(M_{z_1},\ldots, M_{z_m})$ on the Hilbert space $(\hl, K)$. 
If $T\in B_1(\Omega^*)$, the curvature matrix $\mathcal K_T(\bar{w})$ at a fixed but arbitrary point $\bar{w}\in \Omega^* $ is defined by
$$\mathcal K_T(\bar{w})=-\Big (\!\!\Big(\,\partial_i\bar{\partial}_j \log \|\gamma(\bar{w})\|^2\,\Big )\!\!\Big)_{i,j=1}^m,$$
where $\gamma$ is a holomorphic frame of $E_{T}$ defined on some open subset $\Omega_0^*\subseteq \Omega^*$ containing $\bar{w}$. Here,  $\partial_i$ and $\bar{\partial}_j$ denote $\frac{\partial}{\partial w_i}$ and $\frac{\partial}{\partial \bar{w}_j}$, respectively. 
If $T$
is realized as the adjoint of the multiplication tuple $M$ on some reproducing kernel Hilbert space $(\hl, K)\subseteq \rm{Hol}(\Omega_0)$, where $w\in \Omega_0$,  the curvature $\mathcal K_T(\bar{w})$  is then equal to $$-\Big (\!\!\Big(\,\partial_i\bar{\partial}_j \log K(w,w)\,\Big )\!\!\Big)_{i,j=1}^m.$$ 

Let $\Omega\subset \mathbb C$ be open and $\rho:\Omega \to \mathbb R_+$ be a $C^2$-smooth function. The Gaussian curvature of the metric 
$\rho$ is given by  the formula  
\begin{equation}\label{Gauss}
\mathcal G_{\rho}(z)=-\frac{\big (\partial\bar{\partial} \log \rho\big) (z)}{\rho(z)^2}, z\in \Omega.\end{equation}
If $K:\Omega\times \Omega\to \mathbb C$
is a non-negative definite kernel with $K(z,z)>0$, then the function $\frac{1}{K}$
% which is defined as
%$$\frac{1}{K}(z)=\frac{1}{K(z,z)},~ z\in \Omega,$$
defines a metric on $\Omega$ and its Gaussian curvature is given by the formula 
$$\mathcal G_{K^{-1}}(z)=K(z,z)^2  \Big(\,\partial\bar{\partial} \log K\,\Big)(z,z), \,\, z \in \Omega.$$
Since $\mathcal G_{K^{-1}}(z)$ can also be written as 
$K(z,z)\partial\bar{\partial}K(z,z)-\partial K(z,z)\bar{\partial}K(z,z)$, it follows that $\mathcal G_{K^{-1}}(z)$ can be extended to a sesqui-analytic function ${\mathcal G}_{K^{-1}}(z,w)$ on $\Omega\times \Omega$. It is therefore natural to extend the definition of the Gaussian curvature to an open subset $\Omega \subset\mathbb C^m$. Thus, for any non-negative definite kernel $K$ on $\Omega$,  we define
\begin{equation}
{\mathcal G}_{K^{-1}}(z,w):= \Big (\!\!\Big(\, K(z,w)\partial_i 
\bar{\partial}_j K(z,w)-\partial_i K(z,w)\bar{\partial}_j K(z,w)\Big)\!\!\Big)_{i,j=1}^m,\;\;z,w\in \Omega,
\end{equation}
where, with a slight abuse of notation, we let the symbols $\partial_i$ and $\bar{\partial}_j$ also stand $\frac{\partial}{\partial z_i}$ and $\frac{\partial}{\partial \bar{w}_j}$, respectively. 
%The following Lemma says that ${\mathcal G}_{K^{-1}}(z,w)$ is a non-negative definite kernel whenever $K$ is non-negative definite.

\begin{proposition}\label{cork^2curv}\rm(\cite[Proposition 2.3]{GM})
Let $\Omega\subset\mathbb{C}^m$  be a domain and    
$K:\Omega \times \Omega \to \mathbb C $ be a sesqui-analytic function. Let $\alpha,\beta$ be two positive real numbers. Suppose that  $K^\alpha$ and $K^\beta$, defined on $\Omega\times \Omega,$ are non-negative definite for some $\alpha,\, \beta >0$. 
Then the function 
$\mathbb K^{(\alpha, \beta)}:\Omega \times \Omega \to \mathcal M_m(\mathbb C)$ defined by  
$$\mathbb K^{(\alpha, \beta)}(z,w):= K^{\alpha+\beta}(z,w)\Big (\!\!\Big(\,\big(\partial_i\bar{\partial}_j\log K\big)(z,w) \,\Big )\!\!\Big)_{i,j=1}^ m, \,\, z,w \in \Omega,$$ 	
is a non-negative definite kernel on $\Omega\times\Omega$ taking values in $\mathcal M_m(\mathbb C)$.
\end{proposition}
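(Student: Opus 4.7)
The plan is to realize the matrix-valued kernel $\mathbb K^{(\alpha,\beta)}$ as a Gram matrix inside the Hilbert space tensor product $(\mathcal H, K^\alpha)\otimes (\mathcal H, K^\beta)$. Since $K^\alpha$ and $K^\beta$ are non-negative definite by hypothesis, these two reproducing kernel Hilbert spaces exist and the reproducing kernel of their Hilbert space tensor product is the pointwise product $K^\alpha(z,w)\, K^\beta(z,w) = K^{\alpha+\beta}(z,w)$. Since $K$ is sesqui-analytic, the derivative vectors $\bar\partial_i K^\alpha(\cdot, w) \in (\mathcal H, K^\alpha)$ are well defined and satisfy the reproducing identity $\langle f,\bar\partial_i K^\alpha(\cdot,w)\rangle = (\partial_i f)(w)$ for every $f \in (\mathcal H, K^\alpha)$.

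The central step is to introduce, for each $w \in \Omega$ and $i \in \{1,\ldots,m\}$, the ``antisymmetric'' vector
$$v_i(w) \,:=\, \beta\,\bar\partial_i K^\alpha(\cdot,w)\otimes K^\beta(\cdot,w) \,-\, \alpha\,K^\alpha(\cdot,w)\otimes \bar\partial_i K^\beta(\cdot,w)$$
in $(\mathcal H,K^\alpha)\otimes (\mathcal H,K^\beta)$, and then to compute the Gram matrix $\langle v_i(w_k), v_j(w_l)\rangle$. Expanding this into its four bilinear pieces and invoking the reproducing property together with the chain-rule identities $\bar\partial_i K^\alpha = \alpha K^{\alpha-1}\bar\partial_i K$, $\partial_j K^\alpha = \alpha K^{\alpha-1}\partial_j K$, and $\partial_j\bar\partial_i K^\alpha = \alpha K^{\alpha-1}\partial_j\bar\partial_i K + \alpha(\alpha-1)K^{\alpha-2}\partial_j K\,\bar\partial_i K$ (and analogously for $\beta$), one collects the coefficient of $K^{\alpha+\beta-1}\partial_j\bar\partial_i K$ as $\alpha\beta(\alpha+\beta)$ and the coefficient of $K^{\alpha+\beta-2}\partial_j K\,\bar\partial_i K$ as $-\alpha\beta(\alpha+\beta)$. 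Hence
$$\langle v_i(w_k), v_j(w_l)\rangle \,=\, \alpha\beta(\alpha+\beta)\,K^{\alpha+\beta}(w_l,w_k)\,(\partial_j\bar\partial_i \log K)(w_l,w_k).$$

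To conclude, fix points $w_1,\ldots,w_p \in \Omega$ and vectors $\xi_1,\ldots,\xi_p \in \mathbb C^m$, set $u := \sum_{k,i}\overline{(\xi_k)_i}\, v_i(w_k)$, and observe $\|u\|^2\geq 0$. After substituting the formula above and relabeling $(k,i)\leftrightarrow (l,j)$, this becomes
$$\alpha\beta(\alpha+\beta)\sum_{k,l}\sum_{i,j} \mathbb K^{(\alpha,\beta)}(w_k,w_l)_{ij}\,(\xi_l)_j\,\overline{(\xi_k)_i} \,\geq\, 0,$$
which is precisely the non-negative definiteness of $\mathbb K^{(\alpha,\beta)}$. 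The main obstacle is the bookkeeping in the Gram matrix calculation: the cross terms $-\alpha\beta\cdot\alpha\cdot\beta$ have to be shown to cancel exactly with the $\alpha(\alpha-1)$ and $\beta(\beta-1)$ contributions so that only the clean multiple of $K^{\alpha+\beta}\partial_i\bar\partial_j\log K$ survives. A secondary but routine point is justifying that $\bar\partial_i K^\alpha(\cdot,w)$ genuinely lies in $(\mathcal H,K^\alpha)$ in the sesqui-analytic setting, so that the reproducing identity for derivatives can be used.
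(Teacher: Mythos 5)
Your argument is correct, and it is essentially the proof of the cited source: realizing $\mathbb K^{(\alpha,\beta)}$ as the Gram matrix of the antisymmetric vectors $\beta\,\bar\partial_i K^\alpha(\cdot,w)\otimes K^\beta(\cdot,w)-\alpha\,K^\alpha(\cdot,w)\otimes\bar\partial_i K^\beta(\cdot,w)$ in $(\mathcal H,K^\alpha)\otimes(\mathcal H,K^\beta)$ is exactly the device used in \cite[Lemma 2.1 and Proposition 2.3]{GM}, and the same Gram-matrix computation reappears verbatim in this paper's proof of Proposition \ref{monotone}. The coefficient bookkeeping ($\alpha\beta(\alpha+\beta)$ for $K^{\alpha+\beta-1}\partial_i\bar\partial_j K$ and $-\alpha\beta(\alpha+\beta)$ for $K^{\alpha+\beta-2}\partial_i K\,\bar\partial_j K$) checks out.
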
 
We obtain the following corollary, saying that  ${\mathcal G}_{K^{-1}}(z,w)$ is a non-negative definite kernel whenever $K$ is non-negative definite, by setting $\alpha=1=\beta$. 
\begin{corollary}\label{gaussiancurv}
Let $\Omega$ be a  domain in $\mathbb{C}^m$. Suppose that 
 $K:\Omega\times\Omega\to\mathbb C$ is a sesqui-analytic non-negative definite kernel. Then 
${\mathcal G}_{K^{-1}}$
is also a non-negative definite kernel on $\Omega$ taking values in $\mathcal M_m(\mathbb C)$.
\end{corollary}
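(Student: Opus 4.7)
The plan is to derive the corollary as a direct specialization of Proposition~\ref{cork^2curv} to the case $\alpha=\beta=1$. The only non-trivial point is to recognize that the expression defining ${\mathcal G}_{K^{-1}}(z,w)$ coincides, entry by entry, with $\mathbb K^{(1,1)}(z,w)$, after which the non-negative definiteness is free.

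First I would establish the identity
\begin{equation*}
K(z,w)\,\partial_i\bar\partial_j K(z,w) - \partial_i K(z,w)\,\bar\partial_j K(z,w) \;=\; K^2(z,w)\,\bigl(\partial_i\bar\partial_j \log K\bigr)(z,w)
\end{equation*}
as sesqui-analytic functions on $\Omega\times\Omega$. On the diagonal, where $K(z,z)>0$, this follows by a direct computation: from $\partial_i \log K = \partial_i K /K$ one obtains $\partial_i\bar\partial_j \log K = \partial_i\bar\partial_j K / K - (\partial_i K)(\bar\partial_j K)/K^2$, and multiplying through by $K^2$ yields the right-hand side. Both sides are sesqui-analytic in $(z,w)\in\Omega\times\Omega$, so agreement on the diagonal forces agreement everywhere by the standard polarization/uniqueness principle for sesqui-analytic functions.

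Consequently, the $m\times m$ matrix-valued function defining ${\mathcal G}_{K^{-1}}$ is precisely $\mathbb K^{(1,1)}$. Since $K$ is, by hypothesis, a sesqui-analytic non-negative definite kernel, both $K^\alpha$ and $K^\beta$ for $\alpha=\beta=1$ are non-negative definite, so the hypotheses of Proposition~\ref{cork^2curv} are satisfied. Applying the proposition gives that $\mathbb K^{(1,1)}={\mathcal G}_{K^{-1}}$ is a non-negative definite $\mathcal M_m(\mathbb C)$-valued kernel on $\Omega$, as claimed.

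The only conceptual obstacle is the bookkeeping in the preceding paragraph, namely justifying that the identity relating the ``curvature-type'' expression to $K^2\,\partial_i\bar\partial_j\log K$ is legitimately an identity of sesqui-analytic functions (rather than merely a pointwise formula on the diagonal where $\log K$ is defined). Once this is invoked, the corollary reduces to a relabelling of Proposition~\ref{cork^2curv}, and no further work is required.
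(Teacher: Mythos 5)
Your proof is correct and is essentially the paper's own argument: the paper obtains the corollary by setting $\alpha=1=\beta$ in Proposition \ref{cork^2curv}, having already observed in the introduction that the sesqui-analytic expression $K\partial_i\bar\partial_jK-\partial_iK\,\bar\partial_jK$ defining $\mathcal G_{K^{-1}}$ is the extension of $K^2\partial_i\bar\partial_j\log K$ off the diagonal. Your explicit justification of that identification via agreement on the diagonal and uniqueness of sesqui-analytic extensions merely spells out what the paper leaves implicit.
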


The introduction of the Gaussian curvature has many advantages and 
Corollary \ref{gaussiancurv} serves as a handy tool for many proofs. This is already apparent from \cite{infinitelydivisiblemetric}, many more examples are given in Section 2 of this paper. We have attempted  to strengthen the curvature inequality in the hope of obtaining a criterion for contractivity of operators in $B_1(\mathbb D)$. We haven't succeeded in doing this yet but several partial answers  that we have obtained indicate that one of these inequalities may do the job. In Section 2, we establish a monotonicity property of the Gaussian curvature. We conclude Section 2 by showing that the partial derivatives from $(\mathcal H, K)$ to $(\mathcal H, \mathcal G_{K^{-1}})$ are bounded. In the third Section we discuss the decomposition of the tensor product of two Hilbert modules, say $\mathcal M_1 \subset \text{\rm Hol}(\Omega)$ and $\mathcal M_2 \subset \text{\rm Hol}(\Omega)$. The tensor product  
$\mathcal M_1\otimes \mathcal M_2$ consists of holomorphic functions on $\Omega \times \Omega$. We consider the nested set of submodules $\mathcal M_1\otimes \mathcal M_2 \supset \mathcal A_0 \supset \mathcal A_1 \supset \cdots \supset \mathcal A_k \supset \cdots$, where $\mathcal A_k$  is the submodule of functions in $\mathcal M_1\otimes \mathcal M_2$ vanishing on the diagonal subset $\Delta$ of $\Omega\times\Omega$ along with their derivatives to order $k$. Setting $\mathcal S_k:= \mathcal A_{k-1}\ominus \mathcal A_{k}$, we have the direct sum decomposition 
$$\mathcal M_1\otimes \mathcal M_2= \bigoplus_{k=1}^\infty \mathcal S_k,$$ 
which one may think of as the Clebsch-Gordon decomposition for Hilbert modules.  
We also have the short exact sequence of Hilbert modules: 
$\begin{tikzcd}
0\arrow{r} &\mathcal A_0 
\arrow{r} {i} & {\mathcal M_1\otimes \mathcal M_2}
\arrow{r}{\pi} & \mathcal S_0
 \arrow{r}& 0.
\end{tikzcd}$
It is important to be able to find invariants for $\mathcal S_0$ from the inclusion
$\mathcal A_0 \subset \mathcal M_1\otimes \mathcal M_2$. In Section 3, in a large class of examples, we find such an invariant, see Theorem \ref{limitq} and the Remark following it. 

\section{Remarks on Curvature inequality}
In this section, we will discuss the curvature inequality for a contractive operator $T:\mathcal H \to \mathcal H$ in the Cowen-Douglas class  $B_1(\mathbb D)$ taking into account Corollary \ref{gaussiancurv}.  First, since the operator $T \in B_1(\mathbb D)$, it follows that the map $\gamma_T:\Omega \to Gr(\mathcal H,1)$, $\gamma_T(w) = \ker (T-w)$, $w\in \mathbb D$, is holomorphic. Here,  $Gr(\mathcal H,1)$ is the Grasmannian of $\mathcal H$ consisting of the $1$ dimensional subspaces. 
Second, any operator $T$ in $B_1(\mathbb D)$ is unitarily equivalent to  the adjoint $M^*$ of the operator $M$ of multiplication by the coordinate function $z$ on some reproducing kernel Hilbert space $(\mathcal H, K)\subseteq {\rm Hol}(\mathbb D)$. 
In particular, any  contraction $T$ in $B_1(\mathbb D)$, modulo unitary equivalence, is of this form.  Also, $M^* K(\cdot, w) = \overbar{w} K(\cdot,w)$, therefore we can take the map $\gamma_T(\overbar{w}) = \mathbb C [K(\cdot , w)]$ and with a slight abuse of notation, we shall write $\gamma_T(\overbar{w}) = K(\cdot , w)$. It is then easy to verify that $(M^* - \overbar{w}I) \overbar{\partial} K(\cdot ,w)=K(\cdot , w)$. Consequently setting $\mathcal N(w)$ to be the $2$ dimensional space $\{K(\cdot , w), \overbar{\partial} K(\cdot , w)\}$, we have that $(M-wI)^*_{|\mathcal N(w)} = \Big (\begin{smallmatrix}0 & 1 \\ 0 & 0 \end{smallmatrix} \Big )$. However if we represent 
$(M-wI)^*_{|\mathcal N(w)}$ with respect to the orthonormal basis $e_1(w), e_2(w)$ obtained by applying the Gram-Schmidt process to the pair of vectors $K(\cdot,w), \overbar{\partial} K(\cdot, w)$, then we have the representation: 
$$N_T(w): = (M-wI)^*_{|\mathcal N(w)}=
\begin{pmatrix}
0& (-\mathcal K_{T}(\overbar{w}))^{-\tfrac{1}{2}}\\
0&0
\end{pmatrix},~w\in\mathbb D.$$

The contractivity of the operator $M$, or equivalently, that of $M^*$ implies that 
the local operators $N_T(w) + \overbar{w} I$, $w\in \mathbb D$, must be contractive. Since a $2\times 2$ matrix of the form $\begin{psmallmatrix} w & \lambda\\0 &  w\end{psmallmatrix}$ is contractive if and only if $|\lambda| \leq 1 - |w|^2$, we 
obtain the curvature inequality of \cite{GMCI} reproduced in the form of a proposition below. 

\begin{proposition} \label{cid}
If $T$ is contraction in $B_1(\mathbb D)$, then the curvature of $T$ is bounded above by the curvature of the backward shift operator  $S^*$. 
\end{proposition}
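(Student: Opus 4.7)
The proof is essentially packaged into the discussion preceding the statement; I would write it out as follows. First, realize $T\in B_1(\mathbb D)$, up to unitary equivalence, as the adjoint $M^*$ of the multiplication operator by the coordinate function on some reproducing kernel Hilbert space $(\mathcal H, K)\subseteq {\rm Hol}(\mathbb D)$, so that the hypothesis $\|T\|\le 1$ becomes $\|M^*\|\le 1$.

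Next, fix $w\in \mathbb D$ and check that the two-dimensional subspace $\mathcal N(w)={\rm span}\{K(\cdot,w),\bar{\partial}K(\cdot,w)\}$ is invariant under $M^*$. The identities $M^*K(\cdot,w)=\bar{w}\,K(\cdot,w)$ and $(M^*-\bar{w}I)\bar{\partial}K(\cdot,w)=K(\cdot,w)$, both recorded in the discussion above, give invariance at once, and hence $\|M^*|_{\mathcal N(w)}\|\le \|M^*\|\le 1$. By the computation already displayed, with respect to the orthonormal basis produced by applying Gram--Schmidt to the pair $K(\cdot,w),\bar{\partial}K(\cdot,w)$, the operator $M^*|_{\mathcal N(w)}$ is represented by the matrix
$$\begin{pmatrix} \bar{w} & (-\mathcal K_T(\bar{w}))^{-1/2}\\ 0 & \bar{w}\end{pmatrix}.$$

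Finally, a routine eigenvalue computation for $A^*A$ shows that a $2\times 2$ matrix $A=\begin{psmallmatrix}a & \lambda\\ 0& a\end{psmallmatrix}$ is contractive if and only if $|\lambda|\le 1-|a|^2$; indeed $A^*A$ has trace $2|a|^2+|\lambda|^2$ and determinant $|a|^4$, from which the claim is immediate. Applied with $a=\bar{w}$ and $\lambda=(-\mathcal K_T(\bar{w}))^{-1/2}$, this forces $(-\mathcal K_T(\bar{w}))^{-1/2}\le 1-|w|^2$, which rearranges to
$$\mathcal K_T(\bar{w})\le -\frac{1}{(1-|w|^2)^2} = \mathcal K_{S^*}(\bar{w}),$$
where the last equality is a direct computation with the Szeg\H{o} kernel $(1-z\bar{w})^{-1}$ of $S^*$.

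I do not anticipate any substantive obstacle here: the whole argument rests on the local two-dimensional model and the elementary norm computation for upper-triangular $2\times 2$ Jordan blocks. What the proposition really does is repackage the extremal contractive Jordan block as a pointwise bound on the curvature function; the genuinely interesting question, hinted at in the introduction, is whether a \emph{converse} of this elementary implication can be upgraded using Corollary \ref{gaussiancurv} into a usable contractivity criterion.
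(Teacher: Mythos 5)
Your proof is correct and follows exactly the paper's own argument, which is contained in the discussion preceding the proposition: the realization of $T$ as $M^*$ on $(\mathcal H,K)$, the restriction to the invariant two-dimensional subspace $\mathcal N(w)$, the orthonormal-basis representation with off-diagonal entry $(-\mathcal K_T(\bar w))^{-1/2}$, and the elementary contractivity criterion $|\lambda|\le 1-|a|^2$ for upper-triangular $2\times 2$ blocks. Nothing is missing; your eigenvalue computation for $A^*A$ is a perfectly adequate way to justify the step the paper simply asserts.
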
 
Without loss of generality, we may assume that the operator $T$ has been relaized as the adjoint of the multiplication operator $M$ on some Hilbert space of holomorphic functions $(\mathcal H, K)$. Note that  $-\mathcal K_T(w)= \partial\overbar{\partial} \log K(w,w)$ and  
the curvature $\mathcal K_{S^*}(w)$ of the backward shift operator $S^*$ is $-\partial\bar{\partial}\log  \mathbb S_{\D}(z,z)$, where $\mathbb S_{\D}(z,w)=\frac{1}{1-z\overbar{w}}$ is the Szeg\"o keenel of the unit disc. In otherwords, 
%it is well-known that the curvature of $T$
%is bounded above by the curvature of the backward shift operator $S^*$ (\cite{}).  However, taking $T$ to be 
for a contractive operator $M^*$ in $B_1(\mathbb D)$, the curvature inequality  takes the form (see  \cite{infinitelydivisiblemetric}): 
\begin{equation} \label{eqn:curvinq}
-\partial\bar{\partial}\log K(z,z)\leq -\partial\bar{\partial}\log \mathbb S_{\D}(z,z)=- \tfrac{1}{(1-|z|^2)^2} ,~z\in \mathbb D.
\end{equation}
%where $\mathbb S_{\D}(z,w)=\frac{1}{1-z\bar{w}},~z,w\in \mathbb D,$ is the Szeg\"o kernel of the  unit disc $\D$. 
From the discussion preceding Proposition \ref{cid}, it is clear that the curvature inequality of a contractive operator in $B_1(\mathbb D)$ is nothing but the contractivity of its restriction to  the $2$ dimensional subspaces $\mathcal N(w)$, $w\in \mathbb D$. So, it is clear that the curvature inequality, in general, is not enough to ensure contractivity. 
We reproduce an example from \cite{infinitelydivisiblemetric} illustrating this phenomenon. 

Let $K_0(z,w)=\frac{8+8z\bar{w}-(z\bar{w})^2}{1-z\bar{w}}$, $z,w\in\mathbb D.$
Note that $K_0(z,w)$ can be written in the form 
$8+16z\bar{w}+15\frac{(z\bar{w})^2}{1-z\bar{w}},$ therefore it defines a non-negative definite kernel on the unit disc.
It is  not hard to see that, in this case 
$$\mathcal K_{M^*}(w)-\mathcal K_{S^*}(w)=-\frac{8(8-4|w|^2-|w|^4)}{1-|w|^2}\leqslant 0,~w\in\mathbb D.$$
Recall that for any reproducing kernel Hilbert space $(\mathcal H, K)$, the operator $M^*$ on $(\hl ,K)$ is a contraction if and only  if that the function $G(z,w):=(1-z\bar{w})K(z,w)$ is non-negative definite on $\mathbb D\times\mathbb D$ (see \cite[Corollary 2.37]{pick-int}). Since $(1-z\bar{w})K_0(z,w)=8+8z\bar{w}-(z\bar{w})^2$ which is not a non-negative definite kernel on the unit disc, it follows that the operator $M^*$ on $(\mathcal H, K_0)$ is not a contraction.

Since the curvature is a complete unitary invariant in the class $B_1(\mathbb D)$, one attempts to strengthen the curvature inequality in the hope of finding a criterion for contractivity in terms of the curvature. One such possibility is discussed in the paper \cite{infinitelydivisiblemetric}  replacing the point-wise inequality of \eqref{eqn:curvinq} by requiring that $0 \preceq \partial\bar{\partial}\log K(z,w)  -\partial\bar{\partial}\log  \mathbb S_{\D}(z,w)$, that is, 
%$$0 \preceq \partial\bar{\partial}\log K(z,z)  - \partial\bar{\partial}\log \mathbb S_{\D}(z,z),$$
%that is, 
$$\Big (\!\! \Big (\partial\bar{\partial}\log K(w_i,w_j)  - \partial\bar{\partial}\log  \mathbb S_{\D}(w_i,w_j)\Big )\!\!\Big )_{i,j=1}^n$$ 
is non-negative definite for all finite subsets $\{w_1, \ldots ,w_n\}$ of $\mathbb D$  and $n\in \mathbb N$. Here, we discuss a different strengthening of the curvature inequality \eqref{eqn:curvinq}. 
%The converse of the curvature inequality is not true, that is, if the curvature of an operator $T$ in $B_1(\mathbb D)$
%is bounded by the curvature of  $S^*$, then $T$ need not be a contraction. 

\begin{proposition}\label{eqn:curvinqstrong}
Let $T\in B_1(\mathbb D)$ be a contraction. Assume that 
$T$ is unitarily equivalent to the operator $M^*$ on $(\mathcal H, K)$ for some non-negative definite kernel $K$
on the unit disc. Then the following inequality holds:
\begin{equation} \label{eqn:4}
K^2(z,w)\preceq \mathbb S^{-2}_{\D}(z,w)\mathcal G_{K^{-1}}(z,w),
%(1-z\bar{w})^2K(z,w)^2\big(-\partial\bar{\par%tial}\log K(z,w) + \partial\bar{\partial}\log %\mathbb S_{\D}(z,w)\big)\preceq 0.
\end{equation}
that is, the matrix
\begin{equation*}
\Big (\!\! \Big ( \mathbb S^{-2}_{\D}(w_i,w_j)\mathcal G_{K^{-1}}(w_i,w_j)-K^2(w_i,w_j)\Big )\!\!\Big )_{i,j=1}^n
\end{equation*}
is non-negative definite for every subset $\{w_1,\ldots,w_n\}$ of $\mathbb D$ and $n\in \mathbb N$.
\end{proposition}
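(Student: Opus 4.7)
The plan is to reduce the inequality \eqref{eqn:4} to a direct application of Corollary \ref{gaussiancurv} by extracting an auxiliary non-negative definite kernel from the contractivity hypothesis. Since $M^*$ on $(\mathcal H,K)$ is a contraction, the criterion from \cite[Corollary 2.37]{pick-int} recalled just above the statement tells us that
\begin{equation*}
G(z,w) := (1-z\bar w)\,K(z,w) = \mathbb S_{\mathbb D}^{-1}(z,w)\,K(z,w)
\end{equation*}
is a sesqui-analytic non-negative definite kernel on $\mathbb D$. Corollary \ref{gaussiancurv} then applies to $G$ and delivers $\mathcal G_{G^{-1}}\succeq 0$. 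The entire argument therefore rests on proving the algebraic identity
\begin{equation*}
\mathbb S_{\mathbb D}^{-2}(z,w)\,\mathcal G_{K^{-1}}(z,w) - K^2(z,w) \;=\; \mathcal G_{G^{-1}}(z,w),
\end{equation*}
after which the conclusion is immediate.

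To prove the identity I would exploit the factorization $K = G\,\mathbb S_{\mathbb D}$ and the resulting additivity $\partial\bar\partial\log K = \partial\bar\partial\log G + \partial\bar\partial\log \mathbb S_{\mathbb D}$, together with two elementary facts: in the one-variable case the definition gives $\mathcal G_{K^{-1}}(z,w) = K^2(z,w)\,\partial\bar\partial\log K(z,w)$ (and likewise for $G$), while a short calculation yields $\partial\bar\partial\log \mathbb S_{\mathbb D}(z,w) = \mathbb S_{\mathbb D}^{2}(z,w)$. Multiplying the additivity identity through by $K^2$ and substituting $K^2 = G^2\,\mathbb S_{\mathbb D}^{2}$ in the first term should produce
\begin{equation*}
\mathcal G_{K^{-1}}(z,w) \;=\; \mathbb S_{\mathbb D}^{2}(z,w)\,\mathcal G_{G^{-1}}(z,w) + \mathbb S_{\mathbb D}^{2}(z,w)\,K^2(z,w),
\end{equation*}
which rearranges to the identity displayed above. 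If one prefers to avoid $\log K$ at points where $K$ might vanish, the same identity can be verified directly from the defining sesqui-analytic expressions for $\mathcal G_{K^{-1}}$ and $\mathcal G_{G^{-1}}$ by expanding $\partial K$, $\bar\partial K$, and $\partial\bar\partial K$ via the product rule.

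Combining the two steps, the kernel $\mathbb S_{\mathbb D}^{-2}\,\mathcal G_{K^{-1}} - K^2$ coincides with the non-negative definite kernel $\mathcal G_{G^{-1}}$, which is precisely \eqref{eqn:4}. The only delicate point in the plan is the logarithmic-derivative bookkeeping; once that identity is in hand, the proof is a one-line invocation of Corollary \ref{gaussiancurv} applied to $G$. I do not anticipate any deeper obstacle.
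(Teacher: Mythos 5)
Your proposal is correct and takes essentially the same route as the paper's proof: both set $G(z,w)=(1-z\bar w)K(z,w)$, use the contractivity criterion to conclude $G\succeq 0$, apply Corollary \ref{gaussiancurv} to $G$, and then use $\mathbb S_{\D}^{-2}(z,w)\,\partial\bar\partial\log \mathbb S_{\D}(z,w)=1$ to identify $\mathcal G_{G^{-1}}$ with $\mathbb S_{\D}^{-2}\mathcal G_{K^{-1}}-K^2$. The algebraic identity you isolate checks out, so there is no gap.
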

\begin{proof}
Setting $G(z,w)=(1-z\bar{w})K(z,w)$, we see that
\begin{align*}
& -G(z,w)^2\partial\bar{\partial}\log G(z,w)\\
& \quad\quad\quad  = (1-z\bar{w})^2K^2(z,w) \big(-\partial\bar{\partial}\log K(z,w) + \partial\bar{\partial}\log \mathbb S_{\D}(z,w)\big),~z,w\in \mathbb D.
\end{align*}
%where $G(z,w)=(1-z\bar{w})K(z,w), z,w\in \mathbb D.$ 
Therefore, since $G(z,w)$ is non-negative definite on $\mathbb D\times\mathbb D$, applying Corollary $\ref{gaussiancurv}$ for $G(z,w)$, we obtain that 
\begin{equation*} 
(1-z\bar{w})^2K(z,w)^2\big(-\partial\bar{\partial}\log K(z,w) + \partial\bar{\partial}\log \mathbb S_{\D}(z,w)\big)\preceq 0.
\end{equation*}
Since $\mathbb S_{\D}(z,w)^{-2}\partial\bar{\partial}\log \mathbb S_{\D}(z,w)=1$, the proof is complete.
\end{proof}
%Multiplying both sides of the inequality \eqref{eqn:4} by $\mathbb S_{\mathbb D}(z,w)^2$, we also obtain a slightly weaker inequality: 
%\begin{equation} \label{eqn:4a}
%\mathbb S_{\mathbb D}(z,w)^2 K(z,w)^2 \preceq  \mathcal G_{K^{-1}}(z,w),\,\, z,w\in \mathbb D.
%\end{equation}
In particular, evaluating \eqref{eqn:4} at a fixed but arbitrary point, the inequality \eqref{eqn:curvinq} is evident.  However, for any contraction $T$ in $B_1(\mathbb D)$ (realized as $M^*$ on $(\mathcal H, K)$), the inequality \eqref{eqn:4} gives a much stronger (curvature) inequality as shown in the computation given below.  Conversely, whether it is strong enough to force contractivity of the operator $M^*$  is not clear. For a different approach, see \cite{Kai}.

In order to show that the inequality \eqref{eqn:4} is stronger than the inequality \eqref{eqn:curvinq}, it suffices to prove the kernel $K_0$ does not satisfy \eqref{eqn:4}. Setting $G_0(z,w)=(1-z\bar{w})K_0(z,w)$, we get 
$G_0(z,w)=8+8z\bar{w}-(z\bar{w})^2$, $z,w\in \mathbb D$. Thus
\begin{align*}
G_0(z,w)^2\partial\bar{\partial}\log G_0(z,w)
&=G_0(z,w)\partial\bar{\partial}G_0(z,w)-\partial G_0(z,w)\bar{\partial}G_0(z,w)\\
&=(8+8z\bar{w}-(z\bar{w})^2)(8-4z\bar{w})-(8z-2z^2\bar{w})(8\bar{w}-2z\bar{w}^2)\\
&=64-32z\bar{w}-8(z\bar{w})^2,
\end{align*}
which is clearly not a non-negative definite kernel. Hence
the operator $M^*$ on $(\mathcal H,K_0)$ does not satisfy  inequality \eqref{eqn:4}.

\begin{remark}We now have the following remarks.
\begin{enumerate} \item[(i)]  Under the assumptions of Proposition \ref{eqn:curvinqstrong}, it follows from \cite[Theorem 5.1]{PaulsenRaghupati} that the Hilbert space 
$(\mathcal H,K^2)$ is contained in the Hilbert space $(\mathcal H, \mathbb S^{-2}_{\D}\mathcal G_{K^{-1}})$, and  
the inclusion map from $(\mathcal H,K^2)$ to $(\mathcal H, \mathbb S^{-2}_{\D}\mathcal G_{K^{-1}})$ is contractive.
\item[(ii)] Recall that unitary equivalence class of the operator $M$ acting on a reproducing kernel Hilbert space $(\mathcal H, K)$ is determined by the kernel $K$ modulo pre- and post-multiplcation by a non-vanishing holomorphic function and its conjugate, see \cite[Theorem 3.7]{Curtosalinas} and the remark following it. The Guassian curvature $\mathcal G_{K^{-1}}$ of a non-negative definite kernel $K$ clearly depends on the  choice of the kernel $K$ and therefore is not a function of the unitary equivalence class of the operator $M$. However, we note that the validity of the inequality \eqref{eqn:4} depends only on the unitary equivalence class of the operator $M$.  
\end{enumerate}
\end{remark}

Let $\Omega$ be a finitely connected bounded planar domain and ${\rm Rat}(\Omega^*)$ be the ring of rational functions with poles off  $\overline{\Omega^*}$. Let $T$ be an operator in  $B_1(\Omega^*)$ with $\sigma(T)=\overline{\Omega^*}$. Suppose that the homomorphism $q_T:{\rm Rat}(\Omega^*) \to  B(\mathcal H)$ given by 
$$q_T(f) = f(T),\;\;f\in {\rm Rat}(\Omega^*),$$ is contractive, that is, $\|f(T)\| \leq \|f\|_{\Omega^*, \infty}$, $f\in {\rm Rat}(\Omega^*)$. As before, we think of $T$ as the adjoint $M^*$ of the multiplication operator $M$ on some reproducing kernel Hilbert space $(\mathcal H, K) \subset \text{\rm Hol}(\Omega)$.  Setting $G_f(z,w) = (1- f(z)\overbar{f(w)}) K(z,w)$ and using the 
the contractivity of $f(M^*)$, $\|f\|_{\infty, \Omega} \leq 1$, we have that $G_f\succeq 0$. Applying Corollary \ref{gaussiancurv}, we conclude that  
\begin{align*}
\begin{split}
0\preceq & G_f(z,w)^2 \partial \bar{\partial} \log G_f(z,w) \\
= &  G_f(z,w)^2 \Big (- \tfrac{f^\prime(z)\overbar{f^\prime(w)}}{(1 - f(z) \overbar{f(w)})^2} + \partial \bar{\partial} \log K (z,w) \Big )\\
= & - K(z,w)^2 f^\prime(z)\overbar{f^\prime(w)} + (1 - f(z)\overbar{f(w)})^2 K(z,w)^2 \partial \bar{\partial} \log K (z,w)
\end{split}
\end{align*}
for any rational function $f$  with poles off~ $\overbar \Omega$ and 
$| f(z) | \leq 1,$ $z\in \Omega$.  Also, if $f^\prime$ is a non-vanishing function on $\Omega$, then the pull-back of the metric induced by the Szeg\"{o} kernel is the metric $f^*(\mathbb S_\mathbb D)(z,z) = \tfrac{|f^\prime(z)|}{1-|f(z)|^2}$, $z\in \Omega$. Thus if $f^\prime$ is not zero on $\Omega$, then the curvature inequality takes the form
$$ K(z,w)^2 \preceq f^*(\mathbb S_\mathbb D)(z,w)^{-2} \mathcal G_{K-1}(z,w), \,\, z,w \in \Omega,$$
where $f^*(\mathbb S_\mathbb D)(z,w)^{2}$ is the kernel  $\tfrac{f^\prime(z)\overline{f^\prime(w)}}{(1-f(z)\overline{f(w)})^2}$.
As in the case of the disc, in particular, evaluating this inequality at a fixed but arbitrary point $z\in \Omega$, we have 
\begin{align*}
\partial \bar{\partial} \log K (z,z) \geq \sup\Big \{\tfrac{|f^\prime(z)|^2}{ (1 - |f(z)|^2)^2}	:  f\in {\rm Rat}(\Omega), \|f\|_{\Omega, \infty} \leq 1\Big \}  = \mathbb S_\Omega(z,z)^2,
\end{align*}
where $\mathbb S_\Omega$ is the Szeg\"{o} kernel of the domain $\Omega$. 
This is the curvature inequality for contractive homomorphisms {\rm  (see \cite[Corollary 1.2']{GMCI}) and also \cite{Ramiz}}).

We now  show that an analogue of Proposition \ref{eqn:curvinqstrong} is also valid for spherical contractions in $B_1(\mathbb B^m)$, where  $\mathbb B^m$ is the $m$-dimensional unit 
ball in $\mathbb C^m$. Recall that a commuting $m$-tuple $T=(T_1,\ldots,T_m)$
of operators on $\mathcal H$ is said to be a row contraction if $\sum_{i=1}^m T_iT_i^*\leq I.$  Let $K:\mathbb B^m\times\mathbb B^m\to\mathbb C$ be a sesqui-analytic positive definite kernel. Assume that the commuting $m$-tuple $M=(M_1,\ldots,M_m)$ of multiplication by the coordinate functions on $(\mathcal H, K)$ is in $B_1(\mathbb B^m)$. We let $ B_m(z,w):=\frac{1}{1-\langle z, w\rangle}$, $z,w\in\mathbb B^m$, be the reproducing kernel of the Drury-Arveson space. By \cite[Corollary 2]{DMS}, $M$ is a row contraction if and only if $B^{-1}_{m}(z,w) K(z,w)$ is non-negative definite on $\mathbb B^m$.  Thus, if  $M$ on $(\mathcal H, K)$ is a row contraction in $B_1(\mathbb B^m)$, applying Corollary \ref{gaussiancurv} for  $ B_m^{-1}(z,w)K(z,w)$ we obtain the following inequality:
\begin{equation}\label{eqnrow} 
K^2(z,w)  B_m^{-2}(z,w)\Big (\!\!\Big(\,\partial_i\bar{\partial}_j\log  B_m(z,w)\Big)\!\!\Big)_{i,j=1}^m
\preceq  B_m^{-2}(z,w)\mathcal G_{K^{-1}}(z,w).
%(1-\langle z, w\rangle)^2K(z,w)^2\left(-\Big(\partial_i\bar{\partial}_j\log K(z,w)\Big)_{i,j=1}^m + \Big(\partial_i\bar{\partial}_j\log %K_{\mathcal D}(z,w)\Big)_{i,j=1}^m\right)\preceq 0.
\end{equation}
As before, evaluating at a  fixed but arbitrary point $z$ in $\mathbb B^m$, we obtain \cite[Corollary 2.3]{infinitelydivisiblemetric}.  

We now prove that the Gaussian curvature $\mathcal G_{K^{-1}}$ is monotone.
\begin{proposition}\label{monotone}
Let $\Omega\subset \mathbb C^m$ be a  domain. Suppose that $K_1$ and $K_2$  are two scalar valued positive definite kernels on $\Omega$ satisfying 
$K_1\succeq K_2$. Then 
$$  \mathcal G_{K_1^{-1}}(z,w) \succeq  \mathcal G_{K_2^{-1}}(z,w).
%\big(K_1^2\partial_i\bar{\partial}_j\log K_1(z,w) \big)_{i,j=1}^m \succeq \big(K_2^2\partial_i\bar{\partial}_j\log K_2(z,w) \big)_{i,j=1}^m .
$$ 
\end{proposition}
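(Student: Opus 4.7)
The plan is to set $K := K_1 - K_2 \succeq 0$ (so $K_1 = K_2 + K$) and decompose the difference $\mathcal{G}_{K_1^{-1}} - \mathcal{G}_{K_2^{-1}}$ as a sum of two matrix-valued non-negative definite kernels. Expanding $\mathcal{G}_{K_1^{-1}}(z,w)_{ij} = K_1 \partial_i \bar{\partial}_j K_1 - (\partial_i K_1)(\bar{\partial}_j K_1)$ via $K_1 = K_2 + K$ and regrouping yields
\[
\mathcal{G}_{K_1^{-1}}(z,w) - \mathcal{G}_{K_2^{-1}}(z,w) = \mathcal{G}_{K^{-1}}(z,w) + B(z,w),
\]
where
\[
B_{ij}(z,w) = K_2\, \partial_i\bar{\partial}_j K \;+\; K\, \partial_i\bar{\partial}_j K_2 \;-\; (\partial_i K_2)(\bar{\partial}_j K) \;-\; (\partial_i K)(\bar{\partial}_j K_2)
\]
(all evaluated at $(z,w)$). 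Since $K$ is sesqui-analytic and non-negative definite, Corollary \ref{gaussiancurv} gives $\mathcal{G}_{K^{-1}} \succeq 0$ at once, so the proof reduces to showing $B \succeq 0$.

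I would realize $B$ as a Gram kernel in the tensor product of the reproducing kernel Hilbert spaces $\mathcal{H}_2$ and $\mathcal{H}_K$ associated (via Aronszajn's construction) with $K_2$ and $K$ respectively. Sesqui-analyticity of the two kernels guarantees that for each $w \in \Omega$ and each $1 \leq i \leq m$ there exist vectors $\tilde{\partial}_i K_2(\cdot, w) \in \mathcal{H}_2$ and $\tilde{\partial}_i K(\cdot, w) \in \mathcal{H}_K$ satisfying the differentiated reproducing property $\langle f, \tilde{\partial}_i K_2(\cdot, w)\rangle = \partial_i f(w)$ for all $f \in \mathcal{H}_2$ (and similarly for $K$). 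For each $w \in \Omega$ and each $1 \leq i \leq m$ set
\[
u_i(w) := K_2(\cdot, w) \otimes \tilde{\partial}_i K(\cdot, w) \;-\; \tilde{\partial}_i K_2(\cdot, w) \otimes K(\cdot, w) \;\in\; \mathcal{H}_2 \otimes \mathcal{H}_K.
\]
Direct expansion of $\langle u_j(w), u_i(z)\rangle_{\mathcal{H}_2 \otimes \mathcal{H}_K}$ using the four standard identities
\[
\langle K_\bullet(\cdot, w), K_\bullet(\cdot, z)\rangle = K_\bullet(z, w), \quad \langle K_\bullet(\cdot, w), \tilde{\partial}_i K_\bullet(\cdot, z)\rangle = \partial_i K_\bullet(z, w),
\]
\[
\langle \tilde{\partial}_j K_\bullet(\cdot, w), K_\bullet(\cdot, z)\rangle = \bar{\partial}_j K_\bullet(z, w), \quad \langle \tilde{\partial}_j K_\bullet(\cdot, w), \tilde{\partial}_i K_\bullet(\cdot, z)\rangle = \partial_i\bar{\partial}_j K_\bullet(z, w),
\]
applied to both $K_2$ and $K$, recovers exactly the four terms of $B_{ij}(z,w)$. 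Hence $B_{ij}(z, w) = \langle u_j(w), u_i(z)\rangle$, and for any finite $\{w_r\} \subset \Omega$ and $\{c^{(r)}\} \subset \mathbb{C}^m$,
\[
\sum_{r,s,i,j} B_{ij}(w_s, w_r)\, c^{(r)}_j\, \overline{c^{(s)}_i} = \Bigl\|\sum_{r,j} c^{(r)}_j\, u_j(w_r)\Bigr\|^2 \geq 0,
\]
showing $B \succeq 0$.

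The genuine work is in guessing the correct antisymmetric ansatz $u_i(w)$; once it is on the table, the proof is bookkeeping with the reproducing identities. The shape of $u_i(w)$ is natural in retrospect: $\mathcal{G}_{K^{-1}}$ itself admits a similar pairing of $K(\cdot, w)$ against $\tilde{\partial}_i K(\cdot, w)$ inside a single RKHS, and here one simply distributes the two kernel factors across the two different RKHSs $\mathcal{H}_2$ and $\mathcal{H}_K$, thereby encoding the bilinear character of the cross term $B$ in the pair $(K_2, K)$. A minor subtlety is that $K$ is only non-negative, not positive, definite, but the Aronszajn quotient still produces a legitimate $\mathcal{H}_K$ in which $K(\cdot, w)$ and $\tilde{\partial}_i K(\cdot, w)$ sit unambiguously.
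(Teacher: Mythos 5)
Your proof is correct and is essentially the paper's own argument: the paper likewise sets $K_3=K_1-K_2$, writes $\mathcal G_{K_1^{-1}}=\mathcal G_{K_2^{-1}}+\mathcal G_{K_3^{-1}}+B$, and realizes the cross term $B$ as the Gram kernel of the same antisymmetric vectors $K_2(\cdot,w)\otimes\bar{\partial}_iK_3(\cdot,w)-\bar{\partial}_iK_2(\cdot,w)\otimes K_3(\cdot,w)$ in $(\mathcal H,K_2)\otimes(\mathcal H,K_3)$. The only cosmetic difference is that you verify non-negativity of the Gram kernel by the norm-square computation, whereas the paper cites a lemma from an earlier work for that step.
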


\begin{proof}
Set $ K_3 = K_1-K_2$. By hypothesis,  $K_3$ is non-negative definite on $\Omega$. For $1\leq i,j\leq m$, a straightforward computation shows that 
\begin{align}\label{eqorderpres}
\begin{split}
K_1^2 \partial_i\bar{\partial}_j \log K_1 &= K_2^2 \partial_i\bar{\partial}_j \log K_2 + K_3^2 \partial_i\bar{\partial}_j \log K_3  \\
&\quad\quad \quad\quad + K_2\partial_i\bar{\partial}_jK_3 + K_3\partial_i\bar{\partial}_jK_2
   -\partial_iK_2\bar{\partial}_jK_3 - \partial_iK_3\bar{\partial}_jK_2.
\end{split}
\end{align}
Now set $\gamma_i(w)=  K_2(\cdot,w)\otimes \bar{\partial}_i K_3(\cdot,w)-\bar{\partial}_i K_2(\cdot,w)\otimes K_3(\cdot,w), 1\leq i\leq m$,
$w\in \Omega$. For $1\leq i,j\leq m$ and $z,w\in \Omega$, then we have 
\begin{align}\label{eqnmonotone}
\begin{split}
&\left\langle \gamma_j(w), \gamma_i(z)\right\rangle\\
&\quad\quad \quad = \big(K_2\partial_i\bar{\partial}_jK_3\big)(z,w)+ \big(K_3\partial_i\bar{\partial}_jK_2\big)(z,w)
   -\big(\partial_iK_2\bar{\partial}_j K_3\big)(z,w)-\big(\partial_iK_3\bar{\partial}_j K_2\big)(z,w).
\end{split}
\end{align}
Combining \eqref{eqorderpres} and \eqref{eqnmonotone}, we obtain 
\begin{align*}
& \big(\big(K_1^2 \partial_i\bar{\partial}_j \log K_1\big)(z,w) \big)_{i,j=1}^m\\
&\quad\quad\quad  = \big(\big(K_2^2\partial_i\bar{\partial}_j\log K_2\big)(z,w) \big)_{i,j=1}^m + 
\big(\big(K_3^2\partial_i\bar{\partial}_j\log K_3\big)(z,w)\big)_{i,j=1}^m + \big(\left\langle \gamma_j(w), \gamma_i(z)\right\rangle\big)_{i,j=1}^m. 
\end{align*}
Note that   $(z,w)\mapsto \big( \langle \gamma_j(w), \gamma_i(z)\rangle\big)_{i,j=1}^m$ is a non-negative definite kernel on $\Omega$ (see \cite[Lemma 2.1]{GM}). The proof is now complete since sum of two non-negative definite kernels remains non-negative definite.
\end{proof}
As a consequence of Proposition \ref{monotone}, we obtain the following  inequality for row contractions involving the Gaussian curvature.
\begin{corollary}\label{corGaussian}
Let $K:\mathbb B^m\times \mathbb B^m\to \mathbb C$ be a sesqui-analytic positive definite kernel. Assume that $K$ is normalized at the origin, that is, $K(z,0)=1,~z\in\mathbb B^m$. Suppose that the commuting tuple
$M$ of multiplication by the coordinate functions is a row contraction on $(\mathcal H, K).$ Then
\begin{equation}\label{weakerineq}
    \mathcal G_{K^{-1}}(z,w) \succeq \mathcal G_{B_m^{-1}}(z,w).
\end{equation}
\end{corollary}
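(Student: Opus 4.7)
The plan is to reduce \eqref{weakerineq} to the monotonicity statement of Proposition \ref{monotone} by establishing the pointwise kernel ordering $K\succeq B_m$. Once this is in hand, Proposition \ref{monotone} applied with $K_1=K$ and $K_2=B_m$ (both positive definite on $\mathbb B^m$) immediately gives $\mathcal G_{K^{-1}}\succeq \mathcal G_{B_m^{-1}}$, which is exactly \eqref{weakerineq}.

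First, since the commuting tuple $M=(M_1,\ldots,M_m)$ on $(\mathcal H,K)$ is a row contraction, the criterion \cite[Corollary 2]{DMS} recalled just above the statement supplies a factorization $K=B_m\cdot L$, where $L:=B_m^{-1}K$ is non-negative definite on $\mathbb B^m$. Evaluating at $w=0$ and using the normalizations $K(z,0)=1$ and $B_m(z,0)=1$, one obtains $L(z,0)=1$ for every $z\in\mathbb B^m$. Consequently, the constant function $\mathbf 1$ coincides with $L(\cdot,0)$ and lies in the reproducing kernel Hilbert space $(\mathcal H,L)$, with $\|\mathbf 1\|^2=L(0,0)=1$.

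Next, I would use this unit vector together with Cauchy--Schwarz to show that $L$ dominates the constant kernel $1$: for any finite set $\{w_1,\ldots,w_n\}\subset\mathbb B^m$ and scalars $c_1,\ldots,c_n$,
\[
\sum_{i,j=1}^n c_i\overline{c_j}\,L(w_i,w_j)=\Bigl\|\sum_{i=1}^n\overline{c_i}\,L(\cdot,w_i)\Bigr\|_{(\mathcal H,L)}^2\geq \Bigl|\Bigl\langle \mathbf 1,\sum_{i=1}^n\overline{c_i}\,L(\cdot,w_i)\Bigr\rangle\Bigr|^2=\Bigl|\sum_{i=1}^n c_i\Bigr|^2,
\]
so that $L-1\succeq 0$. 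Multiplying by the non-negative definite kernel $B_m$ and invoking the Schur product theorem yields $K-B_m=B_m(L-1)\succeq 0$, that is $K\succeq B_m$. Feeding this into Proposition \ref{monotone} closes the argument.

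I do not anticipate any substantial technical obstacle. The crux is the elementary observation that the normalization $K(z,0)=1$ forces the auxiliary kernel $L=B_m^{-1}K$ to dominate the constant kernel $1$, which upgrades the row-contractivity of $M$ to a genuine pointwise ordering of positive definite kernels that Proposition \ref{monotone} can consume. Without the normalization at the origin one would only get $K\succeq c(\cdot)\,B_m$ for a kernel-dependent non-vanishing factor, which would not directly feed into the monotonicity statement since $\mathcal G_{K^{-1}}$ is not scale invariant.
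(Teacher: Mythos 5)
Your proposal is correct and follows essentially the same route as the paper: use the row-contractivity criterion to get $L=B_m^{-1}K\succeq 0$, observe that the normalization forces $L(\cdot,0)=\mathbf 1$ with $\|\mathbf 1\|_{(\mathcal H,L)}=1$, deduce $L\succeq 1$, multiply by $B_m$ to obtain $K\succeq B_m$, and invoke Proposition \ref{monotone}. The only difference is cosmetic: where the paper cites \cite[Theorem 3.11]{PaulsenRaghupati} for the implication ``$\mathbf 1\in(\mathcal H,L)$ with norm at most one implies $L\succeq 1$,'' you prove it directly by Cauchy--Schwarz, which is a valid and self-contained substitute.
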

\begin{proof} Since the tuple $M$ on $(\mathcal H, K)$ is a row contraction,  $\tilde{K}(z,w):=B_m^{-1}(z,w)K(z,w)$ defines a non-negative definite kernel on $\mathbb B^m.$ The kernel $\tilde{K}$ is normalized at 0 since $K$ is normalized at 0. Thus  $1=\tilde{K}(\cdot, 0)\in (\mathcal H, \tilde{K})$ and 
$$\|1\|^2_{(\mathcal H, \tilde{K})}=\langle \tilde{K}(\cdot, 0), \tilde{K}(\cdot, 0)\rangle_{(\mathcal H, \tilde{K})}=\tilde{K}(0,0)=1.$$
Hence it follows from \cite[Theorem 3.11]{PaulsenRaghupati} that $\tilde{K}\succeq 1$. Since the product of two non-negative definite kernels remain non-negative definite, multiplying both sides with $ B_m$, we get $K\succeq B_m.$ The proof is now complete by applying Proposition \ref{monotone}. 
\end{proof}

\begin{remark}
We point out that Corollary \ref{corGaussian}  can also be derived from \eqref{eqnrow}. In particular, in case $m=1$, Corollary \ref{corGaussian}
is a consequence of Proposition \ref{eqn:curvinqstrong}. But since the kernel $K_0$ satisfies the inequality $K_0\succeq \mathbb S_{\D}$,  it follows from Theorem \ref{k^2curvineq} that $\mathcal G_{K_0^{-1}}(z,w) \succeq \mathcal G_{\mathbb S_{\D}^{-1}}(z,w)$. Therefore,  the inequality \eqref{weakerineq}
is weaker than  the inequality \eqref{eqn:4} in case $m=1$.
\end{remark}

%%%%%%%%%%%%%%%%%%%%%%%%%%%%%%%%%%
After establishing a lower bound for the Gaussian curvature of a non-negative definite kernel, we show that the partial derivatives are bounded from 
$(\hl ,K)$ to $(\hl, \mathcal G_{K^{-1}})$. We recall from \cite[Lemma   3.1]{infinitelydivisiblemetric} that $\Big (\!\!\Big(\,\partial_i\bar{\partial}_j K(z,w)\,\Big )\!\!\Big)_{i,j=1}^m$ is an non-negative definite kernel whenever $K$ is non-negative definite. 

\begin{theorem}\label{k^2curvineq}
Let $\Omega\subset \mathbb C^m$ be a  domain. Let $K:\Omega \times \Omega \to \mathbb C$ be a non-negative definite kernel. Suppose that the Hilbert space $(\mathcal{H}, K)$ contains the constant function $1$. Then 
$$\Big (\!\!\Big(\,\partial_i\bar{\partial}_j K(z,w)\!\Big )\!\!\Big)_{i,j=1}^m\preceq c\,\, \mathcal G_{K^{-1}}(z,w),$$
where $ c = \|1\|^2_{(\mathcal H,K)}$.
\end{theorem}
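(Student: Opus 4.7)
The plan is to reduce the claimed inequality to the non-negativity of the Gaussian curvature of a carefully chosen auxiliary kernel, obtained by subtracting a constant from $K$.

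First, since the constant function $1$ lies in $(\mathcal H, K)$ with $\|1\|^2_{(\mathcal H,K)} = c$, a standard consequence of the reproducing property (cf.\ \cite[Theorem 3.11]{PaulsenRaghupati}, used already in the proof of Corollary \ref{corGaussian}) gives $K(z,w) \succeq \tfrac{1}{c}$ as kernels. Setting $K'(z,w) := K(z,w) - \tfrac{1}{c}$, this means $K'$ is a sesqui-analytic non-negative definite scalar kernel on $\Omega$.

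The main step is then the algebraic identity
\begin{equation*}
c\,\mathcal G_{K^{-1}}(z,w) \,-\, \Big(\!\!\Big(\,\partial_i\bar{\partial}_j K(z,w)\,\Big)\!\!\Big)_{i,j=1}^m \;=\; c\,\mathcal G_{(K')^{-1}}(z,w).
\end{equation*}
This is immediate from the definition of $\mathcal G_{K^{-1}}$ combined with two elementary observations: the first-order and mixed partials kill the constant $\tfrac{1}{c}$, so $\partial_i K = \partial_i K'$, $\bar{\partial}_j K = \bar{\partial}_j K'$, and $\partial_i\bar{\partial}_j K = \partial_i\bar{\partial}_j K'$; while in the product $K\,\partial_i\bar{\partial}_j K = (K' + \tfrac{1}{c})\,\partial_i\bar{\partial}_j K'$ the extra term contributes exactly $\tfrac{1}{c}\,\partial_i\bar{\partial}_j K$, which after multiplying by $c$ cancels the $\partial_i\bar{\partial}_j K$ term on the left hand side.

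Once this identity is in hand, the proof concludes by invoking Corollary \ref{gaussiancurv} applied to the non-negative definite kernel $K'$: this yields $\mathcal G_{(K')^{-1}} \succeq 0$ as an $\mathcal M_m(\mathbb C)$-valued kernel, which is precisely what is needed. I do not anticipate any real obstacle here: the only non-trivial inputs are the Paulsen--Raghupati bound used in step one and Corollary \ref{gaussiancurv} itself, and the connecting identity is a direct computation relying only on the vanishing of derivatives on constants.
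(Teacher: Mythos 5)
Your proposal is correct and follows essentially the same route as the paper: the paper shows $cK-1\succeq 0$ via an orthonormal basis containing $1/\sqrt{c}$ (rather than citing \cite[Theorem 3.11]{PaulsenRaghupati}), applies Proposition \ref{cork^2curv} to $cK-1$, and carries out the same cancellation identity, which differs from yours only by the harmless constant factor $c^2$ relating $\mathcal G_{(cK-1)^{-1}}$ to $\mathcal G_{(K-1/c)^{-1}}$. No gaps.
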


\begin{proof} 
Set $c=\|1\|^2_{(\mathcal{H}, K)}$. Choose an orthonormal basis $\{e_n(z)\}_{n\geq 0}$ of $(\mathcal H, K)$ with $e_0(z)=\frac{1}{\sqrt{c}}$. Then
$$K(z,w)-\frac{1}{c}=\sum_{i=1}^\infty
e_i(z)\overbar{e_i(w)}, ~z,w\in \Omega .$$
Hence $K(z,w)-\frac{1}{c}$ is non-negative definite on $\Omega\times\Omega$, or equivalently $cK-1$ is non-negative definite on $\Omega\times\Omega$. Therefore, by Corollary $\ref{cork^2curv}$, it follows that $\Big (\!\!\Big(\;(cK-1)^2\partial_i\bar{\partial}_j \log (cK-1)\;\Big )\!\!\Big)_{i,j=1}^m$ is non-negative definite on $\Omega\times\Omega$. Note that, for $z,w\in \Omega$, we have
\begin{align*}\big((cK-1)^2\partial_i&\bar{\partial}_j \log (cK-1)\big)(z,w)\\
=&(cK-1)(z,w)\big(\partial_i\bar{\partial}_j(cK-1)\big)(z,w)
-\big(\partial_i(cK-1)\big)(z,w)\big(\bar{\partial}_j(cK-1)\big)(z,w)\\ 
= & c^2K (z,w)\partial_i\bar{\partial}_j K(z,w)-
c\partial_i\bar{\partial}_j K(z,w)-
c^2\partial_iK(z,w )\bar{\partial}_jK(z,w)\\ 
= & c^2 K^2\partial_i\bar{\partial}_j \log K(z,w) 
-c\partial_i\bar{\partial}_j K(z,w).
\end{align*}
Hence we conclude that
$$\Big (\!\!\Big(\;\partial_i\bar{\partial}_j K(z,w)\;\Big )\!\!\Big)_{i,j=1}^m\preceq c\,\,\mathcal G_{K^{-1}}(z,w).$$
\vskip -1em
\end{proof}   

\begin{corollary}
Let $\Omega\subset \mathbb C^m$ be a  domain. Let $K:\Omega \times \Omega \to \mathbb C $ be a non-negative  
definite kernel. 
Then the linear operator $\boldsymbol \partial: (\mathcal{H}, K )\to  (\mathcal{H}, \mathcal G_{K^{-1}})$, where $\boldsymbol \partial f = (\partial_1 f, \ldots , \partial_m f)^{\rm tr}$, $f\in (\mathcal{H}, K )$, is bounded with $\|\boldsymbol \partial \| \leq \|1\|_{(\hl, K)}$.  
Moreover if $K$ is normalized at the point $w_0\in \Omega$, that is, $K(\cdot,w_0)$ is the constant function 1, then 
the linear operator $\boldsymbol \partial: (\mathcal{H}, K )\to  (\mathcal{H}, \mathcal G_{K^{-1}})$ is contractive.
%$\big(\;\mathcal{H},(\partial_i\bar{\partial}_j K(z,w))_{i,j=1}^m\;\big)$ 
%is contained in  $(\mathcal{H}, \mathbb K)$ 
%and the inclusion map is contractive.
\end{corollary}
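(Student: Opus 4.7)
The plan is to factor $\boldsymbol\partial$ as the composition of two contractions: a coisometry from $(\mathcal H,K)$ onto an intermediate matrix-valued reproducing kernel Hilbert space, followed by the contractive embedding provided by Theorem \ref{k^2curvineq}. Throughout, write $\tilde K(z,w):=\Big (\!\!\Big(\,\partial_i\bar\partial_j K(z,w)\,\Big )\!\!\Big)_{i,j=1}^m$ and $c:=\|1\|^2_{(\mathcal H,K)}$; one may assume $c<\infty$, since otherwise the asserted bound is vacuous.

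First I would verify that $\boldsymbol\partial:(\mathcal H,K)\to(\mathcal H,\tilde K)$ is a coisometry, and in particular a contraction. Using the reproducing property in both spaces, for any $f\in(\mathcal H,K)$, $w\in\Omega$ and $\eta\in\mathbb C^m$,
$$\langle \boldsymbol\partial f,\,\tilde K(\cdot,w)\eta\rangle_{\tilde K}=\sum_{j=1}^m\overline{\eta_j}\,\partial_j f(w)=\Big\langle f,\,\sum_{j=1}^m \eta_j\,\bar\partial_j K(\cdot,w)\Big\rangle_K,$$
which identifies $\boldsymbol\partial^*\big(\tilde K(\cdot,w)\eta\big)=\sum_j\eta_j\,\bar\partial_j K(\cdot,w)$. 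A direct calculation, based on the identity $\langle \bar\partial_j K(\cdot,w),\bar\partial_i K(\cdot,w)\rangle_K=\partial_i\bar\partial_j K(w,w)=\tilde K(w,w)_{ij}$, shows that $\|\boldsymbol\partial^*(\tilde K(\cdot,w)\eta)\|_K^2=\langle \tilde K(w,w)\eta,\eta\rangle_{\mathbb C^m}=\|\tilde K(\cdot,w)\eta\|_{\tilde K}^2$. This identity extends to finite sums $\sum_\alpha \tilde K(\cdot,w_\alpha)\eta_\alpha$, and then by density to all of $(\mathcal H,\tilde K)$, so $\boldsymbol\partial^*$ is isometric and $\boldsymbol\partial\boldsymbol\partial^*=I_{(\mathcal H,\tilde K)}$; in particular $\|\boldsymbol\partial\|_{K\to\tilde K}\leq 1$.

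Next, Theorem \ref{k^2curvineq} supplies the inequality $\tilde K\preceq c\,\mathcal G_{K^{-1}}$. By \cite[Theorem 5.1]{PaulsenRaghupati}, exactly as used in Remark (i) above, this is equivalent to a contractive inclusion $(\mathcal H,\tilde K)\hookrightarrow (\mathcal H,c\,\mathcal G_{K^{-1}})$; unwinding the scalar rescaling of the norm, one reads off $\|g\|_{\mathcal G_{K^{-1}}}\leq\sqrt c\,\|g\|_{\tilde K}$ for every $g\in(\mathcal H,\tilde K)$. Composing the two steps yields $\|\boldsymbol\partial f\|_{\mathcal G_{K^{-1}}}\leq\sqrt c\,\|f\|_K=\|1\|_{(\mathcal H,K)}\,\|f\|_K$, which is the first assertion. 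The moreover statement is immediate: if $K(\cdot,w_0)\equiv 1$, then $\|1\|^2_{(\mathcal H,K)}=\langle K(\cdot,w_0),K(\cdot,w_0)\rangle_K=K(w_0,w_0)=1$, so $\|\boldsymbol\partial\|\leq 1$. The only delicate point I expect is the coisometry computation, where one must carefully keep track of which variable each derivative acts on; once that bookkeeping is in place, the rest is a direct invocation of Theorem \ref{k^2curvineq} together with the Aronszajn/Paulsen--Raghupathi comparison.
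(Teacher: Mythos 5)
Your proposal is correct and follows essentially the same route as the paper: factor $\boldsymbol\partial$ through the intermediate space $(\mathcal H,(\partial_i\bar\partial_j K)_{i,j=1}^m)$, where it acts as a coisometry (the paper phrases this as being unitary off its kernel), and then use Theorem \ref{k^2curvineq} together with the Paulsen--Raghupathi inclusion theorem to bound the second factor by $\|1\|_{(\mathcal H,K)}$. The only difference is that you supply the detailed verification of the coisometry step, which the paper merely asserts.
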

\begin{proof}
To prove the first assertion of the corollary, note that the map $\boldsymbol \partial$ is unitary from $\ker \boldsymbol \partial^\perp$ to $(\mathcal{H},(\partial_i\bar{\partial}_j K)_{i,j=1}^m)$, and therefore is contractive from $(\hl,K)$ to $(\mathcal{H},(\partial_i\bar{\partial}_j K)_{i,j=1}^m)$.  
To complete the proof, it is therefore enough to show that $(\mathcal{H},(\partial_i\bar{\partial}_j K)_{i,j=1}^m)$ 
is contained in  $(\mathcal{H}, \mathcal G_{K^{-1}})$ 
and the inclusion map is bounded by $\|1\|_{(\mathcal H, K)}$.
This follows from Theorem \ref{k^2curvineq} using \cite[Theorem 6.25]{PaulsenRaghupati}.
For the second assertion, note that $\|1\|^2_{(\mathcal{H},K)}
=\left\langle K(\cdot,w_0),K(\cdot,w_0)\right \rangle_{(\mathcal{H}, K)}=K(w_0,w_0)=1$
by hypothesis and use Theorem  $\ref{k^2curvineq}$ to complete the proof.
%
%Hence, by Theorem  $\ref{k^2curvineq},$ it follows that the map $\boldsymbol \partial$ is contractive
\end{proof}

%%%%%%%%%%%%%%%%%%%%%%%%%%%%%%%%%%%%%%%%%%%%%%%%%%%%%%%%%%%%%%%%%%%%%%%%%%%
\section{A limit Computation}
Let $\Omega$ be a bounded domain in $\mathbb C^m$. Let $\boldsymbol M^* \in B_1(\Omega^*)$ be the  adjoint of  the $m$-tuple $\boldsymbol M$ of multiplication by the coordinate functions on a reproducing kernel Hilbert space $(\mathcal H, K)$ consiting of holomorphic functions on $\Omega\subset \mathbb C^m$.
Let  $\mathcal A(\Omega)$ be the function algebra of all those functions holomorphic in some open neighbourhood of the compact set  $\overbar{\Omega}$ equipped with the supremum norm on $\overbar{\Omega}$. The map $\mathbf m_f: h \mapsto f \cdot h$, $f\in \mathcal A(\Omega)$, $h\in (\mathcal H, K)$, where $(f\cdot h)(z) = f(z) h(z)$, defines a module multiplication for $(\mathcal H, K)$ over the algebra $\mathcal A(\Omega)$.  We let $\mathcal M:=(\mathcal H, K)$ denote this Hilbert module. Let $\mathcal M_0 \subseteq \mathcal M$ be a submodule. We now have a short exact sequence of Hilbert modules 
$$\begin{tikzcd}
0\arrow{r} &\mathcal M_0 %\arrow{d} 
\arrow{r} {i} & {\mathcal M}
%\arrow{d}{L} 
\arrow{r}{\pi} & \mathcal Q
%\arrow{d} 
\arrow{r}& 0
%0\arrow{r} &\mbox{[}\widetilde{\mathcal I}\mbox{]} \arrow{r}{\widetilde{X}} &\widetilde{\mathcal H} \arrow{r}{\tilde{\pi}}& \widetilde{\mathcal Q} \arrow{r}& 0,
\end{tikzcd}$$
where $i$ is the inclusion map and $\pi$ is the quotient map. The problem of finding invariants for $\mathcal Q$ given the inclusion $\mathcal M_0 \subset \mathcal M$ has been studied in several papers (cf. \cite{DMQ, DMV}). A variant of this problem occurs by replacing the inclusion map with some other module map, for instance, one might set $\mathcal M_0 =  \varphi \mathcal M$ for some $\varphi \in \mathcal A(\Omega)$. Here we are going to consider the case of submodules $\mathcal M_0$ consisting of the maximal set of functions in $\mathcal M$ vanishing on some fixed subset $\mathcal Z$ of $\Omega$. A description of the specific examples we consider here follows.

 Let $K_1$ and $K_2$ be two scalar valued non-negative definite kernels  on $\Omega$. Assume that both the kernels sesqui-analtic. It is well known that $(\hl, K_1)\otimes (\hl, K_2)$ is  the reproducing kernel Hilbert space determined by the non-negative definite kernel $K_1\otimes K_2$, where $K_1\otimes K_2:(\Omega\times \Omega)\times(\Omega\times \Omega)\to \mathbb C$ is given by 
$$(K_1\otimes K_2)(z,\zeta;w,\rho)=K_1(z,w)K_2(\zeta,\rho),\; \; z,\zeta,w,\rho\in \Omega.$$  
%Assume that the multiplication operator $M_{z_i}$ is bounded on $(\hl, K_1)$ as well as $(\hl, K_2)$ for $i=1,\ldots,m.$ 
%We identify the tensor product $(\mathcal H ,K_1)\otimes (\mathcal H, K_2)$ as a Hilbert spaces of holomorphic functions defined on $\Omega \times \Omega$. 
We assume that the  operator $M_{z_i}$ of multiplication by the coordinate function $z_i$ is bounded on $(\mathcal H, K_1)$ as well as on $(\mathcal H, K_2)$ for  $i=1,\ldots,m$.  
Then $(\mathcal H, K_1) \otimes (\mathcal H, K_2)$ 
may be realized as a Hilbert module over the polynomial ring $\mathbb C[z_1,\ldots,z_{2m}]$ with the module action
% Thus the map 
%$$\mathbf m: \big(\mathbb C[z_1,\ldots,z_{2m}]\big) \times \big((\mathcal H, K_1) \otimes (\mathcal H, K_2)\big) \to (\mathcal H, K_1) \otimes (\mathcal H, K_2)$$
defined by  
$$\mathbf m_p(h) = p h,\: h\in (\mathcal H, K_1) \otimes (\mathcal H, K_2),\: p\in \mathbb C[z_1,\ldots,z_{2m}].$$
%provides a  module multiplication on $(\mathcal H, K_1) \otimes (\mathcal H, K_2)$ over the polynomial ring $\mathbb C[z_1,\ldots,z_{2m}]$. 
The Hilbert space $(\mathcal H, K_1) \otimes (\mathcal H, K_2)$ admits a natural direct sum decomposition as follows.

%Let $\mathbb Z_+$ denote the set of all non-negative integers. For $\boldsymbol i =(i_1,\ldots,i_m)\in \mathbb Z_+^m,$ 
%let $|\boldsymbol i|=i_1+\cdots+i_m$. Let $f:\Omega\times \Omega\to \mathbb C$ be a holomorphic function. Let 
%$\big(\tfrac{\partial}{\partial \zeta}\big)^{\boldsymbol i} f(z,\zeta)$ be  
%the function $\frac{\partial^{|\boldsymbol i|}}{{\partial \zeta_1^{i_1}}\cdots {\partial \zeta_m^{i_m}}} f(z,\zeta)$
%and  $\big(\big(\tfrac{\partial}{\partial \zeta}\big)^{\boldsymbol i} f(z,\zeta)\big)_{|\Delta}$ be the restriction of $\big(\tfrac{\partial}{\partial \zeta}\big)^{\boldsymbol i} f(z,\zeta)$
%to the set $\Delta$. Similarly if $K:\Omega\times\Omega\to \mathbb C$ 
%is non-negative definite, then $\bar{\partial}^{\boldsymbol i}
%K(\cdot,w)$ denote the function 
%$\tfrac{\partial^{|\boldsymbol i|}}{\partial \bar{w}_1^{i_1}\cdots \partial \bar{w}_m^{i_m}} K(\cdot,w)$.

For a non-negative integer $k$, 
let $\mathcal A_k$ be the subspace of 
$(\mathcal H ,K_1)\otimes (\mathcal H, K_2)$ defined by
\begin{equation}\label{eqA_k}
\mathcal A_k:=\big\{f\in (\mathcal H ,K_1)\otimes (\mathcal H ,K_2)
:\big(\big(\tfrac{\partial}{\partial \zeta}\big)^{\boldsymbol i} f(z,\zeta)\big)_{|\Delta}=0,
\;|\boldsymbol i|\leq k\big\},
\end{equation}
where $\boldsymbol i=(i_1,\ldots,i_m)\in {\mathbb Z}_+^m$, $|\boldsymbol i|=i_1+\cdots+i_m$, 
$\big(\tfrac{\partial}{\partial \zeta}\big)^{\boldsymbol i}=\frac{\partial^{|\boldsymbol i|}}{\partial \zeta_1^{i_1}\cdots\partial \zeta_m^{i_m}}$, and 
$\big(\big(\tfrac{\partial}{\partial \zeta}\big)^{\boldsymbol i} f(z,\zeta)\big)_{|\Delta}$ is 
the restriction of $\big(\tfrac{\partial}{\partial \zeta}\big)^{\boldsymbol i} f(z,\zeta)$
to the diagonal set $\Delta:=\{(z,z):z\in \Omega\}$.
It is easily verified that each of the subspaces $\mathcal A_k$ is closed and invariant under multiplication by any polynomial in $\mathbb C[z_1,\ldots, z_{2m}]$ and therefore they are sub-modules of $(\mathcal H ,K_1)\otimes (\mathcal H, K_2)$.
%Let $K^{\otimes}_{\mathcal A_n}$ denote the reproducing kernel for the subspace $\mathcal A_n$. 
Setting $\mathcal S_0= \mathcal A_0^\perp$, $\mathcal S_k:= \mathcal A_{k-1} \ominus \mathcal A_{k}$, $k = 1, 2, \ldots$, we obtain  a direct sum decomposition of the Hilbert space $(\mathcal H ,K_1)\otimes (\mathcal H, K_2)$ as follows
$$
(\mathcal H ,K_1)\otimes (\mathcal H, K_2) = \bigoplus _{k=0}^\infty \mathcal S_k.$$
Define a linear map $\mathcal R_1:\hla \to {\rm Hol}(\Omega,\mathbb{C}^m)$ by setting 
\begin{equation}\label{the map R_1}
\mathcal R_1(f)=
\frac{1}{\sqrt{\alpha\beta(\alpha+\beta)}}\begin{pmatrix}
(\beta\partial_{1}f-\alpha\partial_{m+1}f)_{|\Delta}\\
\vdots\\
(\beta\partial_{m}f-\alpha\partial_{2m}f)_{|\Delta}
\end{pmatrix}
\end{equation}
for $f\in \hla$.
% and note that 
%\begin{equation}\label{eqnthemapR_1}
%\mathcal R_1(f)(w)=
%\frac{1}{\sqrt{\alpha\beta(\alpha+\beta)}}\begin{pmatrix}
%\left\langle f, \phi_1(w)\right\rangle\\
%\vdots\\
%\left\langle f, \phi_m(w)\right\rangle
%\end{pmatrix},~w\in \Omega.
%\end{equation}
%Note that $$\mathcal R_1(f)(z)=\frac{1}{\sqrt{\alpha\beta(\alpha+\beta)}}\begin{pmatrix}
%\langle f,\beta \bar{\partial}_1K^\alpha_z\otimes K^\beta_z-
%\alpha K^\alpha_z\otimes\bar{\partial}_1K^\beta_z\rangle\\
%\vdots\\
%\langle f,\beta \bar{\partial}_mK^\alpha_z\otimes K^\beta_z-
%\alpha K^\alpha_z\otimes\bar{\partial}_mK^\beta_z\rangle
%\end{pmatrix}.$$
Let  $\iota: \Omega \to \Omega\times \Omega$ be the map $\iota(z) = (z,z)$,  $z\in\Omega$. Any Hilbert module $\mathcal  M$ over the polynomial ring $\mathbb C[z_1,\ldots,z_{m}]$ may be thought of as a module $\iota_\star\mathcal M$ over the ring $\mathbb C[z_1,\ldots,z_{2m}]$ by re-defining the multiplication: $\mathbf m_p(h) = (p \circ \iota) h$, $h\in \mathcal M$ and $p \in  \mathbb C[z_1,\ldots,z_{2m}]$.
The module $\iota_\star\mathcal M$ over $\mathbb C[z_1,\ldots,z_{2m}]$ is defined to be the push-forward of the module $\mathcal M$ over $\mathbb C[z_1,\ldots,z_m]$  under the inclusion map $\iota$.

\begin{theorem}\label{module on 2nd box}\rm (\cite[Theorem 3.5.]{GM})
Suppose $K:\Omega\times\Omega\to \mathbb C$ is a sesqui-analytic function such that the functions $K^{\alpha}$ and $K^{\beta}$, defined on $\Omega\times\Omega$, are non-negative definite for some $\alpha,\beta>0$. Then the followings  hold:
\begin{enumerate}
\item[(1)] $\ker \mathcal R_1=\mathcal S_1^\perp$ and  $\mathcal R_1$ maps  $\mathcal S_1$ isometrically onto 
$\big(\hl, \mathbb{K}^{(\alpha,\beta)}\big)$.
\item[(2)] 
Suppose that the  operator $M_i$ of multiplication by the co-ordinate function $z_i$ is 
 bounded on both $(\hl, K^\alpha)$ and $(\hl, K^{\beta})$ for $i=1,2,\ldots,m$ .
Then the Hilbert module $\mathcal S_1$ is isomorphic to the push-forward module $\iota_\star \big(\hl, \mathbb{K}^{(\alpha,\beta)}\big)$  via the module map ${\mathcal R_1}_{|\mathcal S_1}$.
\end{enumerate}
\end{theorem}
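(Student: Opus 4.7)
The plan is to identify an explicit spanning set for $\mathcal S_1$, recognize $\mathcal R_1$ as the inner product against those spanning vectors, and then read off both the isometry and the module identity by a direct computation.

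First, I set
$\gamma_i(w) := \beta\,\bar\partial_i K^\alpha(\cdot,w)\otimes K^\beta(\cdot,w) - \alpha\, K^\alpha(\cdot,w)\otimes\bar\partial_i K^\beta(\cdot,w)$
and $e_0(u) := K^\alpha(\cdot,u)\otimes K^\beta(\cdot,u)$. The subspace $\mathcal A_1$ is the common kernel of the bounded point-evaluation functionals $f\mapsto f(u,u)$ and $f\mapsto \partial_{\zeta_j} f(u,u)$ (for $u\in \Omega$, $1\leq j\leq m$), so $\mathcal A_1^\perp$ is the closed linear span of the corresponding reproducing vectors $e_0(u)$ and $K^\alpha(\cdot,u)\otimes \bar\partial_j K^\beta(\cdot,u)$; equivalently, via an invertible linear change of basis, the closed span of $\{e_0(u),\,\bar\partial_j e_0(u),\, \gamma_j(u)\}$, where $\bar\partial_j$ on $e_0(u)$ denotes differentiation in $\bar u_j$. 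The chain rule $\bar\partial_j K^\alpha = \alpha K^{\alpha-1}\bar\partial_j K$ shows $\gamma_i(w)(v,v)=0$, so $\gamma_i(w)\in \mathcal A_0$, while $\bar\partial_j e_0(u)$ is a norm limit of $e_0$-vectors and hence lies in $\mathcal S_0 = \mathcal A_0^\perp$. Combined with the orthogonal decomposition $\mathcal A_1^\perp = \mathcal S_0 \oplus \mathcal S_1$, this yields $\mathcal S_1 = \overline{\mathrm{span}}\{\gamma_i(w) : w\in\Omega,\ 1\leq i\leq m\}$.

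Second, by the reproducing property in $\hla$,
$\langle f, \gamma_i(w)\rangle = \beta\,\partial_i f(w,w) - \alpha\,\partial_{m+i} f(w,w) = \sqrt{\alpha\beta(\alpha+\beta)}\,\mathcal R_1(f)(w)_i$,
so $\ker \mathcal R_1 = \{\gamma_i(w)\}^\perp = \mathcal S_1^\perp$. For the image, a direct expansion of $\langle \gamma_j(w), \gamma_i(z)\rangle$ into its four terms, together with the chain rule $\partial K^\gamma = \gamma K^{\gamma-1}\partial K$, yields
$\langle \gamma_j(w), \gamma_i(z)\rangle = \alpha\beta(\alpha+\beta)\,K^{\alpha+\beta-2}(z,w)\bigl(K\,\partial_i\bar\partial_j K - \partial_i K\cdot\bar\partial_j K\bigr)(z,w) = \alpha\beta(\alpha+\beta)\,\mathbb K^{(\alpha,\beta)}(z,w)_{ij}$.
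Combined with the reproducing formula, $\mathcal R_1(\gamma_j(u)) = \sqrt{\alpha\beta(\alpha+\beta)}\,\mathbb K^{(\alpha,\beta)}(\cdot,u)\mathbf e_j$, the standard reproducing vector of $(\mathcal H, \mathbb K^{(\alpha,\beta)})$ at $u$ in direction $\mathbf e_j$. The matching norm identity $\|\gamma_j(u)\|^2 = \alpha\beta(\alpha+\beta)\,\mathbb K^{(\alpha,\beta)}(u,u)_{jj} = \|\mathcal R_1(\gamma_j(u))\|^2$, together with polarization on these dense spanning sets, makes $\mathcal R_1\big|_{\mathcal S_1}$ an isometric isomorphism onto $(\mathcal H, \mathbb K^{(\alpha,\beta)})$, completing (1).

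Third, the module structure on $\mathcal S_1 \cong \mathcal A_0/\mathcal A_1$ is $p\cdot f := P_{\mathcal S_1}(pf)$ for $p\in \mathbb C[z_1,\ldots,z_{2m}]$ and $f\in \mathcal S_1$; since $\mathcal R_1$ annihilates $\mathcal S_1^\perp$, $\mathcal R_1(p\cdot f) = \mathcal R_1(pf)$. The Leibniz rule gives
$\bigl(\beta\partial_i - \alpha\partial_{m+i}\bigr)(pf)(w,w) = f(w,w)\bigl(\beta\partial_i p - \alpha\partial_{m+i} p\bigr)(w,w) + p(w,w)\bigl(\beta\partial_i f - \alpha\partial_{m+i} f\bigr)(w,w)$.
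Since $f\in \mathcal A_0$ forces $f(w,w)=0$, only the second summand survives, and we conclude $\mathcal R_1(pf)(w) = p(w,w)\,\mathcal R_1(f)(w) = (p\circ \iota)(w)\,\mathcal R_1(f)(w)$, which is precisely the action of $p$ on $\iota_\star(\mathcal H, \mathbb K^{(\alpha,\beta)})$. This establishes (2). The main technical hurdle is the inner product identity $\langle \gamma_j(w), \gamma_i(z)\rangle = \alpha\beta(\alpha+\beta)\mathbb K^{(\alpha,\beta)}(z,w)_{ij}$: it is where the universal constant $\alpha\beta(\alpha+\beta)$ emerges and motivates the normalization in the definition of $\mathcal R_1$, while everything else reduces to formal bookkeeping once this identity is known.
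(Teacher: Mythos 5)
Your proof is correct: the identification of $\mathcal S_1$ with the closed linear span of the vectors $\gamma_i(w)=\beta\,\bar{\partial}_iK^\alpha(\cdot,w)\otimes K^\beta(\cdot,w)-\alpha\,K^\alpha(\cdot,w)\otimes\bar{\partial}_iK^\beta(\cdot,w)$, the Gram-matrix computation $\langle\gamma_j(w),\gamma_i(z)\rangle=\alpha\beta(\alpha+\beta)\,\mathbb K^{(\alpha,\beta)}(z,w)_{ij}$, and the Leibniz-rule argument showing $\mathcal R_1(pf)=(p\circ\iota)\,\mathcal R_1(f)$ for $f\in\mathcal A_0$ are all sound. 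The paper itself does not prove this theorem but quotes it from \cite[Theorem 3.5]{GM}; your argument follows essentially the same route as that source (the same vectors $\gamma_i(w)$ and the same Gram computation underlie the proof of Proposition \ref{monotone} and \cite[Lemma 2.1]{GM}), so there is nothing substantive to add.
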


We consider the example of the Hardy space.
%, we now discuss a result from \cite{DM}. 
Let $K_1(z,w)=K_2(z,w)=\frac{1}{1-z\bar{w}}$, $z,w\in \mathbb D,$ be the  Szeg\"o kernel of the  unit disc $\D$. In this case $(\mathcal H, K_1)\otimes (\mathcal H, K_2)$ is the Hardy space on the bidisc $\mathbb D^2$, and it is often denoted by $H^2(\mathbb D^2)$. Now, we can compute the kernel functions for $\mathcal S_0$ and $\mathcal A_0$ in this example as follows, see \cite{DM}. 
The vectors  $\Big\{\frac{e_k}{\sqrt{k+1}}\Big\}_{k\geq 0}$ form an orthonormal basis of $\mathcal S_0$, where $e_k$ is given by
$$e_k(z_1,z_2)=\sum_{j=0}^k z_1^jz_2^{k-j}, z_1,z_2\in \mathbb D.$$
Therefore the reproducing kernel $K_{\mathcal S_0}$ of $\mathcal S_0$ is given by
$$K_{\mathcal S_0}(\boldsymbol z,\boldsymbol w)=\sum_{k\geq 0}\frac{e_k(\boldsymbol z)\overline{e_k(\boldsymbol w)}}{k+1}~\boldsymbol z=(z_1,z_2), \boldsymbol w=(w_1,w_2)\in \mathbb D^2.$$
A cloosed forf expression for $K_{\mathcal S_0}$ is easily obtained: 
$$K_{\mathcal S_0}(\boldsymbol z,\boldsymbol z)=\frac{1}{|z_1-z_2|^2}\log \frac{|1-z_1\bar{z}_2|^2}{(1-|z_1|^2)(1-|z_2|^2)},~ \boldsymbol z=(z_1,z_2)\in \mathbb D^2.$$
Therefore it follows that
\begin{align*}
&~~~K_{\mathcal A_0}(\boldsymbol z, \boldsymbol z)\\&=\frac{1}{(1-|z_1|^2)(1-|z_2|^2)}-K_{\mathcal S_0}(\boldsymbol z,\boldsymbol z)\\
&=\frac{1}{(1-|z_1|^2)(1-|z_2|^2)}-\frac{1}{|z_1-z_2|^2}\log \frac{|1-z_1\bar{z}_2|^2}{(1-|z_1|^2)(1-|z_2|^2)}\\
&=\frac{1}{(1-|z_1|^2)(1-|z_2|^2)}-\frac{1}{|z_1-z_2|^2}\Big(\frac{|z_1-z_2|^2}{(1-|z_1|^2)(1-|z_2|^2)}-\frac{1}{2}\frac{|z_1-z_2|^4}{(1-|z_1|^2)^2(1-|z_2|^2)^2}+\cdots\Big).
\end{align*}
We are now in a position to find the kernel function for the  module $\mathcal S_1$, which is nothing but the limit:  
\begin{equation}
\lim_{z_2\to z_1} \frac{K_{\mathcal A_0}(\boldsymbol z, \boldsymbol z)}{|z_1-z_2|^2}=\frac{1}{2} \frac{1}{(1-|z_1|^2)^4}, (z_1,z_2)\in \mathbb D^2.
\end{equation}
Consider the short exact sequence $0\to \mathcal A_0\to H^2(\mathbb D^2) \to \mathcal S_0\to 0$. It is known that the quotient module $\mathcal S_0$ is the pushforward of the Bergman module on the disc. We note that  
$$\mathcal K_{\mathcal S_1}(\boldsymbol z) = \mathcal K_{\mathcal S_0}(\boldsymbol z)+\frac{2}{(1-|z_1|^2)^2}, \,\, \boldsymbol z \in \Delta =\{(z,z):z\in \mathbb D\}.$$
Thus  the restriction of $\mathcal K_{\mathcal A_0}$ to the zero set $\Delta$ might serve as an invariant for the inclusion $\mathcal M_0 \subset \mathcal M$. This possibility is explored below in a class of examples. 

%In this subsection, we give an alternative computation for the curvature of an operator in the Cowen - Douglas class $B_1(\Omega)$. 

Let $\Omega \subset \mathbb{C}^m$ be a bounded domain and $K:\Omega\times\Omega\to \mathbb C$ be a sesqui-analytic function such that the functions $K^{\alpha}$ and $K^{\beta}$ are non-negative definite on $\Omega\times\Omega$ for some $\alpha,\beta>0$. For a non-negative integer $p$, let $K_{{\mathcal A}_p}$ be the reproducing kernel of 
${\mathcal A}_p$, where ${\mathcal A}_p$ is defined in $\eqref{eqA_k}$.

To prove the main result of this section, we need the following two lemmas. One way to prove both of the  lemmas is to make the change of variables 
$$u_1(z,\zeta)= \tfrac{1}{2}( z_1- \zeta_1), \ldots, u_m(z,\zeta)=\tfrac{1}{2}(z_m -\zeta_m);\,\, v_1(z,\zeta) = \tfrac{1}{2}( z_1+\zeta_1), \ldots , v_m(z,\zeta)=\tfrac{1}{2}(z_m + \zeta_m).$$ We give the details for the proof of the first lemma. The proof for the second one follows by similar arguments. 

\begin{lemma}\label{lemlimit1}
Let $\Omega\subset \mathbb C^m$ be a domain and let $\Delta$ be the diagonal set $\{(z,z):z\in \Omega\}$. Suppose that $f:\Omega\times \Omega\to \mathbb C$ is a holomorphic function satisfying $f_{|\Delta}=0.$ Then for each $z_0\in \Omega$, there exists a neighbourhood $\Omega_0\subset \Omega$ (independent of $f$) of $z_0$  and holomorphic functions $f_1,f_2,\ldots,f_m$ on $\Omega_0\times \Omega_0$ such that
$$f(z,\zeta)=\sum_{i=1}^{m}(z_i-\zeta_i)f_i(z,\zeta),\; z=(z_1,\ldots,z_m),\zeta=(\zeta_1,\ldots,\zeta_m)\in  \Omega_0.$$  
\end{lemma}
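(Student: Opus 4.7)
The plan is to follow the hint in the paper and introduce the invertible linear change of coordinates $u_i=\tfrac{1}{2}(z_i-\zeta_i)$, $v_i=\tfrac{1}{2}(z_i+\zeta_i)$, with inverse $z_i=u_i+v_i$, $\zeta_i=v_i-u_i$. Call this map $\Phi(z,\zeta)=(u,v)$. Under $\Phi$ the diagonal $\Delta$ is sent to the coordinate slice $\{u=0\}$, so the hypothesis $f|_\Delta=0$ becomes the statement that a holomorphic function vanishes on a complex coordinate subspace, and the desired factorization $f=\sum_i(z_i-\zeta_i)f_i$ becomes the standard holomorphic Hadamard-type identity $g=\sum_i u_i g_i$.

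To execute this, fix $z_0\in\Omega$ and pick a polydisc $P(z_0,2r)\subset\Omega$. Set $Q=P(0,r)\times P(z_0,r)$ in $(u,v)$-coordinates. A triangle-inequality check shows that for $(u,v)\in Q$ both $u+v$ and $v-u$ lie in $P(z_0,2r)\subset\Omega$, so $Q\subset\Phi(\Omega\times\Omega)$, and $g(u,v):=f(u+v,v-u)$ is holomorphic on $Q$ with $g(0,v)=f(v,v)=0$ for $v\in P(z_0,r)$. The standard integral identity
\[
g(u,v)=g(u,v)-g(0,v)=\sum_{i=1}^m u_i\int_0^1\frac{\partial g}{\partial u_i}(tu,v)\,dt=\sum_{i=1}^m u_i\,g_i(u,v)
\]
produces functions $g_i$ holomorphic on $Q$. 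Taking $\Omega_0=P(z_0,r/2)$, a second radius check yields $\Phi(\Omega_0\times\Omega_0)\subset Q$, and defining $f_i(z,\zeta):=\tfrac{1}{2}g_i\bigl(u(z,\zeta),v(z,\zeta)\bigr)$ produces holomorphic functions on $\Omega_0\times\Omega_0$ with $f(z,\zeta)=\sum_{i=1}^m(z_i-\zeta_i)f_i(z,\zeta)$. The polydisc $\Omega_0$ depends only on $z_0$ and $r$, hence is independent of $f$, as required.

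I do not foresee a serious obstacle. The only delicate point is the bookkeeping of polydisc radii to guarantee simultaneously that (i) $Q\subset\Phi(\Omega\times\Omega)$, so the integral representation makes sense, and (ii) $\Phi(\Omega_0\times\Omega_0)\subset Q$, so the factorization descends to a product neighborhood in the original variables. Without the change of variables, a direct attempt to integrate along segments joining $(z,\zeta)$ to $(z,z)$ would be awkward because $\Delta$ is not a coordinate slice in the $(z,\zeta)$-coordinates; the passage to $(u,v)$ linearizes the vanishing locus completely and is the essential ingredient.
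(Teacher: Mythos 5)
Your proof is correct and follows essentially the same route as the paper: the linear change of variables $u=\tfrac{1}{2}(z-\zeta)$, $v=\tfrac{1}{2}(z+\zeta)$ that straightens $\Delta$ into the coordinate slice $\{u=0\}$ is exactly the paper's key step, and your radius bookkeeping for the polydiscs is sound. The only difference is cosmetic: where the paper extracts the factors $g_\ell$ by regrouping the power series of $g$ about $(0,z_0)$ (using that the coefficients $a_{0\boldsymbol j}$ vanish), you use the Hadamard-type integral identity $g(u,v)-g(0,v)=\sum_i u_i\int_0^1 \partial_{u_i}g(tu,v)\,dt$, which yields the same holomorphic $g_i$ on the same polydisc.
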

\begin{proof}
Note that the image of the diagonal set $\Delta\subseteq \Omega\times \Omega$ under the map $\varphi:\Omega\times \Omega \to \mathbb C^{2m}$, where $$\varphi(z,\zeta):=(u_1(z,\zeta),\ldots,u_m(z,\zeta),v_1(z,\zeta),\ldots,v_m(z,\zeta)),$$ is the set $\{(0,w): w\in \Omega\}$.
Therefore we may choose a neighbourhood of $(0,z_0)$ which is a polydisc contained in $\widehat{\Omega}:=\varphi(\Omega \times \Omega)$. Suppose $f$ is a holomorphic function on $\Omega\times \Omega$ vanishing on the set $\Delta$. Setting $g:=f\circ\varphi^{-1}$ on $\hat{\Omega}$, we see that $g$ is a holomorphic function on $\hat{\Omega}$ vanishing on the set $\{(0,w): w\in \Omega\}$. Therefore $g$
has a power series representation around $(0,z_0)$ 
of the form 
$\sum_{\boldsymbol i,\boldsymbol j\in\mathbb Z^m_+} a_{\boldsymbol i \boldsymbol j} { u}^{\boldsymbol i} ( v- z_0)^{\boldsymbol j},$ where $\sum_{\boldsymbol j\in \mathbb Z^m_+} a_{0 \boldsymbol j} ( v- z_0)^{\boldsymbol j}= 0$  on the chosen polydisc. Hence $a_{0 \boldsymbol j}=0$ for all $\boldsymbol j \in \mathbb Z^m_+$, and the power series of $g$ is of  the form $\sum_{\ell=1}^m u_\ell g_\ell( u, v)$, where $$g_\ell( u, v) = \sum_{\boldsymbol i \boldsymbol j} a_{\boldsymbol i \boldsymbol j}u^{\boldsymbol {i}-e_\ell}( v- z_0)^{\boldsymbol {j}},\;1\leq \ell\leq m.$$ 
Here the sum is over all multi-indices $\boldsymbol {i}=(i_1,\ldots,i_m)$ satisfying $i_1=0, \ldots, i_{\ell-1}=0, i_\ell \geq 1$ while  $\boldsymbol {j}$ remains arbitrary. Pulling this expression back to $\Omega\times \Omega$ under the bi-holomorphic map $\varphi$, we obtain the expansion of $f$ in a neighbourhood of $(z_0,z_0)$ as prescribed in the Lemma \ref{lemlimit1}. 
\end{proof}

\begin{lemma}\label{lemlimit2}
Suppose that $f:\Omega\times \Omega\to \mathbb C$ is a holomorphic function satisfying $f_{|\Delta}=0$ and $\big(\big(\frac{\partial}{\partial \zeta_j}\big)f(z,\zeta)\big)_{|\Delta}=0$, $j=1,\ldots,m$. Then for each $z_0\in \Omega$, there exists a neighbourhood $\Omega_0\subset \Omega$ (independent of $f$) of $z_0$ and holomorphic functions $f_{ij}$,  $1\leq i\leq j\leq m$, on $\Omega_0\times \Omega_0$ such that
 $$f(z,\zeta)=\sum_{1\leq i\leq j\leq m}(z_i-\zeta_i)(z_j-\zeta_j)f_{ij}(z,\zeta), \; z,\zeta\in  \Omega_0.$$  
\end{lemma}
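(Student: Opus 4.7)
I will follow the hint in the paper and use the linear change of variables
$$u_i(z,\zeta) = \tfrac{1}{2}(z_i - \zeta_i), \quad v_i(z,\zeta) = \tfrac{1}{2}(z_i + \zeta_i), \quad i = 1, \ldots, m,$$
as in the proof of Lemma \ref{lemlimit1}. Let $\varphi(z,\zeta) = (u,v)$; this is biholomorphic from $\Omega\times\Omega$ onto $\widehat{\Omega} = \varphi(\Omega\times\Omega)$, sends $\Delta$ to $\{u = 0\}$, and $(z_0,z_0)\mapsto(0,z_0)$. Choose a polydisc $U\times V\subset\widehat{\Omega}$ about $(0,z_0)$, let $\Omega_0 = \varphi^{-1}(U\times V)$ (taken independently of $f$), and set $g = f\circ\varphi^{-1}$ on $U\times V$.

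The first key step is to translate the hypotheses on $f$ into hypotheses on $g$. From $\frac{\partial}{\partial\zeta_j} = -\frac{1}{2}\frac{\partial}{\partial u_j} + \frac{1}{2}\frac{\partial}{\partial v_j}$, the assumption $f_{|\Delta}=0$ gives $g(0,v)=0$ for every $v\in V$, and then $\frac{\partial g}{\partial v_j}(0,v)=0$ automatically. Combining this with the hypothesis $(\tfrac{\partial f}{\partial\zeta_j})_{|\Delta}=0$ forces
$$\tfrac{\partial g}{\partial u_j}(0,v) = 0, \qquad v \in V,\ \ j = 1,\ldots,m.$$

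Next, I will expand $g$ as a convergent power series on $U\times V$:
$$g(u,v) = \sum_{\boldsymbol{i},\boldsymbol{j}\in\mathbb{Z}_+^m} a_{\boldsymbol{i}\boldsymbol{j}}\, u^{\boldsymbol{i}}(v-z_0)^{\boldsymbol{j}}.$$
The condition $g(0,v)=0$ on $V$ gives $a_{\boldsymbol{0}\boldsymbol{j}} = 0$ for all $\boldsymbol{j}$. Differentiating term-by-term, $\frac{\partial g}{\partial u_j}(0,v) = \sum_{\boldsymbol{j}} a_{e_j,\boldsymbol{j}}(v-z_0)^{\boldsymbol{j}}$, so $a_{e_j,\boldsymbol{j}} = 0$ for every $\boldsymbol{j}$ and every $j$. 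Hence only terms with $|\boldsymbol{i}|\ge 2$ survive in the expansion of $g$.

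Now I will group the surviving monomials in $u$ by their two leading indices. For each multi-index $\boldsymbol{i}$ with $|\boldsymbol{i}|\ge 2$, define $p(\boldsymbol{i}) = \min\{k:i_k\ge 1\}$ and $q(\boldsymbol{i}) = \min\{k:(\boldsymbol{i}-e_{p(\boldsymbol{i})})_k\ge 1\}$; then $p(\boldsymbol{i})\le q(\boldsymbol{i})$ and $u^{\boldsymbol{i}} = u_{p(\boldsymbol{i})} u_{q(\boldsymbol{i})}\, u^{\boldsymbol{i}-e_{p(\boldsymbol{i})}-e_{q(\boldsymbol{i})}}$. Collecting the terms according to the pair $(p(\boldsymbol{i}),q(\boldsymbol{i}))$ and using absolute convergence on compact subsets of $U\times V$, I obtain a decomposition
$$g(u,v) = \sum_{1\le i\le j\le m} u_i u_j\, g_{ij}(u,v),$$
where each $g_{ij}$ is a convergent power series, hence holomorphic on $U\times V$. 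Finally, pulling back via $\varphi$ and using $u_k\circ\varphi = \frac{1}{2}(z_k-\zeta_k)$, I set $f_{ij} = \tfrac{1}{4}\, g_{ij}\circ\varphi$ and obtain the required expression on $\Omega_0\times\Omega_0$.

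The main obstacle is the combinatorial/analytic step of regrouping the power series so that each summand has the prescribed factor $u_i u_j$ with $i\le j$, and then verifying that the coefficient functions remain genuinely holomorphic rather than merely formal; this is handled by the unique "leading two indices'' assignment $(p,q)$ above together with local uniform convergence of the series on compact polydiscs in $U\times V$. Everything else is a direct translation of the diagonal-vanishing conditions across the biholomorphism $\varphi$.
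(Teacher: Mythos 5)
Your proof is correct and is exactly the argument the paper intends: the paper omits the proof of this lemma, stating only that it follows from the same change of variables $u=\tfrac{1}{2}(z-\zeta)$, $v=\tfrac{1}{2}(z+\zeta)$ used for Lemma \ref{lemlimit1}, and you have carried out precisely that argument, correctly translating the two vanishing hypotheses into $a_{\boldsymbol{0}\boldsymbol{j}}=a_{e_k\boldsymbol{j}}=0$ and regrouping the surviving $|\boldsymbol{i}|\ge 2$ monomials. The only cosmetic point is that $\Omega_0$ should be chosen as a polydisc about $z_0$ in $\Omega$ with $\varphi(\Omega_0\times\Omega_0)\subseteq U\times V$ rather than literally as $\varphi^{-1}(U\times V)$, which is a subset of $\Omega\times\Omega$.
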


\begin{theorem} \label{limitq}
Let $\Omega \subset \mathbb{C}^m$ be a bounded domain and $K:\Omega\times\Omega\to \mathbb C$ be a sesqui-analytic function such that the functions $K^{\alpha}$ and $K^{\beta}$ are non-negative definite on $\Omega\times\Omega$ for some $\alpha,\beta>0$.
For $z$ in $\Omega$ and $1\leq i,j \leq m$, we have 
$$\lim_{\substack{{\zeta_i\to z_i}\\{\zeta_j\to z_j}}}
\left(\frac{K_{\mathcal A_0}(z,\zeta;z,\zeta)}{(z_i-\zeta_i)(\bar{z}_j-\bar{\zeta}_j)}\bigg|_{\zeta_l=z_l,l \neq i,j}\right)  
=\tfrac{\alpha\beta}{(\alpha+\beta)} K(z,z)^{\alpha+\beta}\partial_i\bar{\partial}_j\log K(z,z),$$
where $K_{\mathcal A_0}$ is the reproducing kernel of the subspace $\mathcal A_0$, and
$\tfrac{K_{\mathcal A_0}(z,\zeta;z,\zeta)}{(z_i-\zeta_i)(\bar{z}_j-\bar{\zeta}_j)}\bigg|_{\zeta_l=z_l,l \neq i,j}$ is the restriction of the function $\tfrac{K_{\mathcal A_0}(z,\zeta;z,\zeta)}{(z_i-\zeta_i)(\bar{z}_j-\bar{\zeta}_j)}$ to the set $\big\{(z,\zeta)\in \Omega\times \Omega:z_l=\zeta_l,l=1,\ldots,m, l\neq i,j\big\}$.

\end{theorem}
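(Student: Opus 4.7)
The plan is to exploit the orthogonal decomposition $\mathcal A_0 = \mathcal S_1 \oplus \mathcal A_1$, which gives $K_{\mathcal A_0} = K_{\mathcal S_1} + K_{\mathcal A_1}$, and to analyse each summand using Theorem \ref{module on 2nd box} together with Lemmas \ref{lemlimit1} and \ref{lemlimit2}. The $\mathcal A_1$ piece will be shown to vanish in the limit, while the $\mathcal S_1$ piece is pinned down by the isometric isomorphism $\mathcal R_1 \colon \mathcal S_1 \to (\mathcal H, \mathbb K^{(\alpha, \beta)})$.

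To handle $K_{\mathcal A_1}$, I would first note that for each fixed $(w, \rho)$ the section $K_{\mathcal A_1}(\cdot, \cdot; w, \rho)$ lies in $\mathcal A_1$, so Lemma \ref{lemlimit2} supplies a local factorisation of it as $\sum_{a\leq b}(z_a - \zeta_a)(z_b - \zeta_b)F_{ab}(z,\zeta;w,\rho)$. Iterating this via Hermitian symmetry in the second pair of arguments yields
$$K_{\mathcal A_1}(z, \zeta; w, \rho) = \sum_{\substack{a \le b \\ c \le d}}(z_a - \zeta_a)(z_b - \zeta_b)\overline{(w_c - \rho_c)(w_d - \rho_d)}\,\Phi_{ab,cd}(z, \zeta; w, \rho).$$
Setting $(w, \rho) = (z, \zeta)$ and then restricting to $\zeta_l = z_l$ for $l \neq i, j$ leaves only the terms with $a, b, c, d \in \{i, j\}$, each of which carries at least two holomorphic and two antiholomorphic difference factors drawn from $\{z_i - \zeta_i, z_j - \zeta_j\}$. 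Dividing by $(z_i - \zeta_i)(\bar z_j - \bar \zeta_j)$ therefore still leaves at least one extra difference factor on each side, and the quotient tends to zero as $\zeta_i \to z_i$ and $\zeta_j \to z_j$.

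For $K_{\mathcal S_1}$, I would pick an orthonormal basis $\{e_n\}$ of $\mathcal S_1$; by Theorem \ref{module on 2nd box}(1), $\{\mathcal R_1 e_n\}$ is an orthonormal basis of $(\mathcal H, \mathbb K^{(\alpha, \beta)})$, so the reproducing-kernel identity gives $\sum_n (\mathcal R_1 e_n)_k(z)\,\overline{(\mathcal R_1 e_n)_l(w)} = \mathbb K^{(\alpha, \beta)}_{kl}(z, w)$. Differentiating $e_n(z, z) = 0$ in $z_k$ yields $(\partial_{z_k} e_n)|_\Delta = -(\partial_{\zeta_k} e_n)|_\Delta$, and substituting into the definition of $\mathcal R_1$ gives
$(\partial_{\zeta_k} e_n)(z, z) = -\sqrt{\alpha\beta/(\alpha + \beta)}\,(\mathcal R_1 e_n)_k(z)$, whence
$$\sum_n (\partial_{\zeta_i} e_n)(z, z)\,\overline{(\partial_{\zeta_j} e_n)(z, z)} = \tfrac{\alpha\beta}{\alpha + \beta}\,\mathbb K^{(\alpha, \beta)}_{ij}(z, z).$$
Taylor-expanding each $e_n(z, \zeta)$ in $\zeta$ around $z$ and using $e_n(z, z) = 0$ gives $e_n(z, \zeta) = \sum_k (\zeta_k - z_k)(\partial_{\zeta_k} e_n)(z, z) + O(|\zeta - z|^2)$. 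Squaring, summing over $n$, and restricting to $\zeta_l = z_l$ for $l \ne i, j$, the coefficient of $(\zeta_i - z_i)(\bar\zeta_j - \bar z_j)$ in the expansion of $K_{\mathcal S_1}(z,\zeta;z,\zeta)\big|_{\zeta_l=z_l}$ around the diagonal equals exactly $\tfrac{\alpha\beta}{\alpha+\beta}\mathbb K^{(\alpha,\beta)}_{ij}(z, z)$. Since $(z_i - \zeta_i)(\bar z_j - \bar \zeta_j) = (\zeta_i - z_i)(\bar \zeta_j - \bar z_j)$ and $\mathbb K^{(\alpha, \beta)}_{ij}(z, z) = K(z, z)^{\alpha+\beta}(\partial_i\bar\partial_j\log K)(z, z)$, this yields the claimed value of the limit.

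The main obstacle is making the ``pick out the $(z_i - \zeta_i)(\bar z_j - \bar \zeta_j)$-coefficient'' step rigorous. When $i = j$ the expansion of $K_{\mathcal A_0}\big|_{\zeta_l=z_l,\,l\ne i}$ starts at order $|z_i - \zeta_i|^2$, the cross coefficient is the genuine leading term, and the quotient has an honest limit (this is what reproduces the Hardy-space calculation given just before the theorem). When $i \ne j$, the expansion also contains the diagonal pieces $|\zeta_i - z_i|^2$ and $|\zeta_j - z_j|^2$ with coefficients $\tfrac{\alpha\beta}{\alpha+\beta}\mathbb K^{(\alpha,\beta)}_{ii}$ and $\tfrac{\alpha\beta}{\alpha+\beta}\mathbb K^{(\alpha,\beta)}_{jj}$, so care is needed to interpret the limit as isolating the off-diagonal coefficient; the essential bookkeeping reduces to verifying, via Lemma \ref{lemlimit1} applied to the ONB $\{e_n\}$ of $\mathcal S_1$ together with the $\mathcal A_1$-vanishing above, that the $(i, j)$-cross coefficient in the expansion of $K_{\mathcal A_0}$ around the diagonal is precisely $\tfrac{\alpha\beta}{\alpha+\beta}\mathbb K^{(\alpha,\beta)}_{ij}(z, z)$.
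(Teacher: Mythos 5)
Your proof is essentially the paper's own: the same splitting $K_{\mathcal A_0}=K_{\mathcal S_1}+K_{\mathcal A_1}$, Lemma \ref{lemlimit2} to make the $\mathcal A_1$ contribution vanish, and the unitarity of $\mathcal R_1\colon\mathcal S_1\to(\mathcal H,\mathbb K^{(\alpha,\beta)})$ applied to an orthonormal basis $\{e_n\}$ of $\mathcal S_1$ to identify the surviving coefficient; your identity $(\partial_{\zeta_k}e_n)(z,z)=-\sqrt{\alpha\beta/(\alpha+\beta)}\,(\mathcal R_1e_n)_k(z)$ is exactly the paper's $e_{n,k}(z,z)=\sqrt{\alpha\beta/(\alpha+\beta)}\,(\mathcal R_1e_n)_k(z)$ for the factorization $e_n=\sum_k(z_k-\zeta_k)e_{n,k}$ supplied by Lemma \ref{lemlimit1}, since $e_{n,k}|_\Delta=-(\partial_{\zeta_k}e_n)|_\Delta$. (The paper handles $K_{\mathcal A_1}$ by factoring each basis vector $h_n$ rather than the kernel itself, but that difference is cosmetic.) The caveat you raise at the end for $i\neq j$ is the only substantive point, and it is not a defect of your write-up alone: a term such as $|\zeta_i-z_i|^2\sum_n|e_{n,i}(z,z)|^2$ divided by $(z_i-\zeta_i)(\bar z_j-\bar\zeta_j)$ does not tend to $0$ along arbitrary paths $(\zeta_i,\zeta_j)\to(z_i,z_j)$, so the unrestricted joint limit need not exist, and the same leftover factors appear in your $\mathcal A_1$ estimate. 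The paper resolves this only implicitly: its displayed computation replaces the limit by a product of two one-variable limits, one taken along the slice $z_l=\zeta_l$, $l\neq i$, the other along $z_l=\zeta_l$, $l\neq j$ --- in effect polarizing $K_{\mathcal A_0}(z,\zeta;w,\rho)$ and letting the holomorphic and anti-holomorphic arguments tend to the diagonal independently, which is precisely the ``isolate the $(i,j)$ cross coefficient'' reading you ask for. With the limit interpreted in that iterated/polarized sense, both your argument and the paper's close up, and nothing further is missing from your proposal.
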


\begin{proof}
Let $K_{{\mathcal A}_0\ominus {\mathcal A}_1}(z,\zeta;w,\nu)$ be the reproducing kernels 
of ${\mathcal A}_0\ominus {\mathcal A}_1.$ Fix a point $z_0$ in $\Omega$. Choose a neighbourhood $\Omega_0$ of $z_0$ in $\Omega$ such that the conclusions of Lemma \ref{lemlimit1} and Lemma \ref{lemlimit2} are valid. Now we restrict the kernels $K^{\alpha}$ and  $K^{\beta}$ to $\Omega_0\times \Omega_0.$  

Let $f$ be an arbitrary function in $\mathcal A_1$. Then, by definition, $f$ satisfies the hypothesis of Lemma \ref{lemlimit2}, and therefore, it follows that  
\begin{equation}\label{eqnlimit5}
\lim_{\zeta_i\to z_i}\left(\frac{f(z,\zeta)}{(z_i-\zeta_i)}\bigg|_{z_l=\zeta_l,l \neq i}\right) = 0,\;\;i=1,\ldots,m.
\end{equation}
Let $\{h_n\}_{n\in\z}$ be an orthonormal basis of $\mathcal A_1$. Since the series $\sum_{n=0}^\infty h_n(z,\zeta)\overbar{h_n(z,\zeta)}$ converges uniformly to $K_{\mathcal A_1}(z,\zeta;z,\zeta)$ on the compact subsets of 
$\Omega_0\times \Omega_0$, using \eqref{eqnlimit5} we see that
\begin{align*}
\lim_{\substack{{\zeta_i\to z_i}\\{\zeta_j\to z_j}}}
\left(\frac{K_{\mathcal A_1}(z,\zeta;z,\zeta)}{(z_i-\zeta_i)(\bar{z}_j-\bar{\zeta}_j)}\bigg|_{\zeta_l=z_l,l \neq i,j}\right)&= 
\sum_{n=0}^\infty\lim_{\zeta_i\to z_i}\left(\frac{h_n(z,\zeta)}{(z_i-\zeta_i)}\bigg|_{z_l=\zeta_l,l \neq i}\right) \lim_{\zeta_j\to z_j}\overbar{\left(\frac{h_n(z,\zeta)}{(z_j-\zeta_j)}\bigg|_{z_l=\zeta_l,l \neq j}\right)}\\
&=0.
\end{align*}
Since 
$K_{\mathcal A_0}=K_{\mathcal A_0\ominus \mathcal A_1}+K_{\mathcal A_1}$, the above equality leads to 
$$\lim_{\substack{{\zeta_i\to z_i}\\{\zeta_j\to z_j}}}
\left(\frac{K_{\mathcal A_0}(z,\zeta;z,\zeta)}{(z_i-\zeta_i)(\bar{z}_j-\bar{\zeta}_j)}\bigg|_{\zeta_l=z_l,l \neq i,j}\right) 
=\lim_{\substack{{\zeta_i\to z_i}\\{\zeta_j\to z_j}}}
\left(\frac{K_{\mathcal A_0\ominus \mathcal A_1}(z,\zeta;z,\zeta)}{(z_i-\zeta_i)(\bar{z}_j-\bar{\zeta}_j)}\bigg|_{\zeta_l=z_l,l \neq i,j}\right).$$

Now let $\{e_n\}_{n\in\z}$ be an orthonormal basis of ${\mathcal A}_0\ominus {\mathcal A}_1$.
%Then we have that the series $\sum_{n=0}^{\infty} e_n(z,\zeta)\overbar{e_n(z,\zeta)}$ converges uniformly to 
%$K^\otimes_{{\mathcal A}_0\ominus {\mathcal A}_1}(z,\zeta;z,\zeta)$ on any  compact subsets of $\Omega_0\times\Omega_0$.
Since each $e_n\in {\mathcal A}_0$, by Lemma \ref{lemlimit1}, there exist holomorphic functions 
${e_{n, i}}$, $1\leq i\leq m$, on $\Omega_0\times \Omega_0$ such that 
$$e_n(z,\zeta)=\sum_{i=1}^m (z_i-\zeta_i)e_{n, i}(z,\zeta),\;z,\zeta\in \Omega_0.$$
Thus for $1\leq i\leq m$, we have
\begin{equation}\label{restriction}
\lim_{\zeta_i\to z_i}\left(\frac{e_n(z,\zeta)}{(z_i-\zeta_i)}\bigg|_{z_l=\zeta_l,l \neq i}\right) = e_{n, i}(z,z),\;z\in \Omega_0.
\end{equation}
Since the series $\sum_{n=0}^{\infty} e_n(z,\zeta)\overbar{e_n(z,\zeta)}$ converges to $K_{\mathcal A_0\ominus \mathcal A_1}$
uniformly on compact subsets of $\Omega_0\times \Omega_0$,
using \eqref{restriction}, we see that
\begin{equation}\label{eqnlimit1}
\lim_{\substack{{\zeta_i\to z_i}\\{\zeta_j\to z_j}}}
\left(\frac{K_{\mathcal A_0\ominus \mathcal A_1}(z,\zeta;z,\zeta)}{(z_i-\zeta_i)(\bar{z}_j-\bar{\zeta}_j)}\bigg|_{\zeta_l=z_l,l \neq i,j}\right)=\sum_{n=0}^\infty e_{n, i}(z,z)\overbar {e_{n, j}(z,z)},\;z\in \Omega_0.
\end{equation}
Recall that by Theorem \ref{module on 2nd box}, the map ${\mathcal R}_1:{\mathcal A}_0\ominus {\mathcal A}_1\to (\mathcal{H}, \mathbb K^{(\alpha,\beta)})$ given by
$$ {\mathcal R}_1 f=\frac{1}{\sqrt{\alpha\beta(\alpha+\beta)}}\begin{pmatrix}
(\beta\partial_{1} f-\alpha\partial_{m+1} f)_{|\Delta}\\
\vdots\\
(\beta\partial_{m} f-\alpha\partial_{2m} f)_{|\Delta}
\end{pmatrix},\; f\in \mathcal A_0\ominus \mathcal A_1$$
is unitary. Hence $\{{\mathcal R}_1(e_n)\}_n$ is an orthonormal basis for $(\mathcal{H},\mathbb K^{(\alpha,\beta)})$ and consequently 
\begin{equation}\label{onb sum}
\sum_{n=0}^\infty{\mathcal R}_1(e_n)(z){{\mathcal R}_1(e_n)(w)}^*=\mathbb K^{(\alpha,\beta)}(z,w),\;z,w\in \Omega_0.
\end{equation}
A direct computation shows that 
$$\big((\beta\partial_i-\alpha\partial_{m+i})e_n(z,\zeta)\big)_{|\Delta}=(\alpha+\beta)e_{n, i}(z,\zeta)_{|\Delta},\,\, 1\leq i\leq m,\; n\geq 0.$$
Therefore ${\mathcal R}_1(e_n)(z)=\sqrt{\frac{\alpha+\beta}{\alpha\beta}}
\begin{pmatrix}
e_{n, 1}(z,z)\\
\vdots\\
e_{n, m}(z,z)
\end{pmatrix}.$ Thus using $(\ref{onb sum})$ we obtain
$$\sum_{n=0}^{\infty} \begin{pmatrix}
e_{n, 1}(z,z)\\
\vdots\\
e_{n, m}(z,z)
\end{pmatrix} 
\begin{pmatrix}
e_{n, 1}(z,z)\\
\vdots\\
e_{n, m}(z,z)
\end{pmatrix}^{*}
=\tfrac{\alpha\beta}{(\alpha+\beta)}\mathbb K^{(\alpha,\beta)}(z,z),\;\;z\in \Omega_0.$$
Now the proof is complete using \eqref{eqnlimit1}.
\end{proof}
\begin{remark}
Let $H(z)= \big (\!\!\big (\langle s_i(z), s_j(z)\rangle \big )\!\!\big )_{i,j=1}^n$, $z\in \Omega$, be the Hermitian metric of a holomorphic (trivial) vector bundle $E$ defined on $\Omega$ relative to the holomorpohic frame $\{s_1, \ldots ,s_n\}$. The curvature $\mathbf K_H$ of the vector bundle $E$ is the $(1,1)$ form 
$$\sum_{i,j=1}^n\overbar{\partial}_j\big ( H^{-1}\partial _i H\big ) d\bar{z}_j \wedge d z_i.$$
The trace of the curvature $\mathbf K_H$ is obtained by replacing each of the coefficients $\overbar{\partial}_j\big ( H^{-1}\partial _i H\big )$ by their trace. 
Recall that the determinant bundle $\det E$ is a line bundle determined by the holomorphic frame $s_1\wedge \cdots \wedge s_n$ and the Hermitian metric: $H(z):=\det H(z)$.
The trace of the curvature of the vector bundle $E$ and the curvature of the determinant bundle $\det E$ are equal, i.e.,  $\text{\rm trace}(\mathbf K_H) = \mathbf K_{\det H}$, see \cite[Equation (4.6)]{JPD}. 

Now, from Theorem \ref{limitq}, we see that the Hermitian structure for the Hilbert module $\mathcal S_1$ is $\mathbb K^{(\alpha, \beta)}$. Aslo, we have the following equality:
$$\text{\rm trace}\big (\mathbf K_{\mathbb K^{(\alpha,\beta)}} \big ) = \tfrac{1}{m}\mathbf K_{K^{\alpha + \beta}} + \mathbf K_{\det (\mathcal K_K)}.$$
Thus, in these examples, we see that $\text{\rm trace}\big (\mathbf K_{\mathbb K^{(\alpha,\beta)}} \big )$ is a function of $\alpha + \beta$.  Consequently, if $\alpha +\beta = \alpha^\prime +\beta^\prime$, then  we have 
$$
\begin{tikzcd}
0 \arrow{r} &\mathcal A_0  \arrow{r}{i}
& (\mathcal H, K^\alpha)\otimes (\mathcal H, K^\beta)\arrow{r}{\pi} & (\mathcal H, K^{\alpha+\beta}) \arrow[equal]{d} \arrow{r}& 0\\
0\arrow{r} &\mathcal A_0 \arrow{r}{i} & (\mathcal H, K^{\alpha^\prime})\otimes (\mathcal H, K^{\beta^\prime})\arrow{r}{\pi}& (\mathcal H, K^{\alpha^\prime +\beta^\prime}) \arrow{r}& 0,
\end{tikzcd}
$$
and  $\text{\rm trace}\big (\mathbf K_{\mathbb K^{(\alpha,\beta)}} \big ) = \text{\rm trace}\big (\mathbf K_{\mathbb K^{(\alpha^\prime,\beta^\prime)}} \big )$. 
%as long as $\alpha + \beta = \alpha^\prime +\beta^\prime$. 
Replacing the equality of the quotient modules $(\mathcal H, K^{\alpha+\beta})$ 
and $(\mathcal H, K^{\alpha^\prime +\beta^\prime})$ by an isomorphism does not change the conclusion. In general, replace $K^\alpha$ by $K_1$; $K^\beta$ by $K_2$, and $K^{\alpha^\prime}$ by $K^\prime_1$; $K^{\beta^\prime}$ by $K^\prime_2$ and assume that $K_1^\prime (w,w)  K^\prime_2(w,w) = \varphi(w) K_1(w,w) K_2(w,w) \overline{\varphi(w)}$ for some non-vanishing holomorphic function defined on an open subset $U\subset \Omega$. This means that the quotient modules $\mathcal S_0$ and $\mathcal S_0^\prime$ are equivalent. A straightforward computation then shows that $\text{\rm trace}\big (\mathbf K_{K_{12}} \big ) = \text{\rm trace}\big ( \mathbf K_{K^\prime_{12}} \big )$, 
where $K_{12} = \mathcal G_{(K_1 K_2)^{-1}}$ and similarly, $K^\prime_{12} = \mathcal G_{(K^\prime_1 K^\prime_2)^{-1}}$.
Hence $\text{\rm trace}\big (\mathbf K_{K_{12}} \big)$
is an invariant of the short exact sequences of the form 
$$\begin{tikzcd} 0 \arrow{r} &\mathcal A_0  \arrow{r}{i}
& (\mathcal H, K_1)\otimes (\mathcal H, K_2)\arrow{r}{\pi} & (\mathcal H, K_1 K_2) \arrow{r} \arrow{r}& 0. \end{tikzcd}$$
%$\text{\rm trace}(\mathbf K^{(\alpha,\beta)})$
%$\text{\rm trace}\big (\mathbf K_{\mathbb K^{(\alpha,\beta)}} \big )$
%is an invariant of the short exact sequence $\begin{tikzcd} 0 \arrow{r} &\mathcal A_0  \arrow{r}{i}
%& (\mathcal H, K^\alpha)\otimes (\mathcal H, K^\beta)\arrow{r}{\pi} & (\mathcal H, K^{\alpha+\beta}) \arrow{r} \arrow{r}& 0. \end{tikzcd}$
We  expect  this  to  be  the  case  in  much  greater  generality. 
%of  any  short  exact sequence of the form 
%\begin{tikzcd}
%0 \arrow{r} & {\mathcal M_0} \arrow{r}{i} & {\mathcal M} \arrow{r}{\pi} & \mathcal Q \arrow r &0.
%\end{tikzcd}

%In particular, if $(\mathcal H, K)\subset \text{\rm Hol}(\Omega)$ is a reproducing kernel Hilbert space determined by a quasi-invariant kernel, then considering the 
%tensor product $\mathcal M:=(\mathcal H, K) \otimes (\mathcal H, K)$, we have the inclusion $\mathcal M_0 \subset \mathcal M$, where $\mathcal M_0$ is the submodule of the Hilbert module $\mathcal M$ consisting of functions vanishing on $\Delta$. Hilbert modules possessing a reproducing kernel $K$, where $K$ is quasi-invariant kernel have been investigated vigorously over the past few years, see \cite{}. An explicit formula for the curvature of a quasi-invariant kernel is given in \cite{}. We expect 

 \end{remark}
%We have the following Corollary follows easily from the Theorem. 
%%%%%%%%%%%%%%%%%%%%%%%%%%%%%%%%%%%%%%%%%%%%%%%%%%%%%%%%%%%%%%
%%%%%%%%%%%%%%%%%%%%%%%%%%%%%%%%%%%%%%%%%%%%%%%%%%%%%%%%%%%
%%%%%%%%%%%%%%%%%%%%%%%%%%%%%%%%%%%%%%%%%%%%%%%%%%%%%
The following corollary is immediate by choosing $\alpha = 1 = \beta$ in Theorem \ref{limitq}. It also gives an alternative for computing the Gaussian curvature defined in \eqref{Gauss} whenever the metric is of the form $K(z,z)^{-1}$ for some positive definite kernel $K$ defined on $\Omega\times \Omega$, where $\Omega\subset \mathbb C$ is a bounded domain. Indeed, the assumption that $\boldsymbol T$ is in $B_1(\Omega)$ is not necessary to arrive at the formula in the corollary below. 
\begin{corollary}
Let $\boldsymbol T$ be a commuting $m$-tuple in  the Cowen-Douglas class $B_1(\Omega)$ realized as the adjoint of the $m$-tuple $M$ of multiplication operators by coordinate functions on a reproducing kernel Hilbert space $(\mathcal H, K) \subseteq {\rm Hol}(\Omega_0),$ for some open subset $\Omega_0$ of $\Omega$.  Then the curvature $\mathcal K_{\boldsymbol T}(z)=\Big(\mathcal K_{\boldsymbol T}(z)_{i,j}\Big)_{i,j=1}^m$ is  given by the formula 
$$\mathcal K_{\boldsymbol T}(z)_{i,j} = \frac{2}{K(z,z)^2}\lim_{\substack{{\zeta_i\to z_i}\\{\zeta_j\to z_j}}}
\left(\frac{K_{\mathcal A_0}(z,\zeta;z,\zeta)}{(z_i-\zeta_i)(\bar{z}_j-\bar{\zeta}_j)}\bigg|_{\zeta_l=z_l,l \neq i,j}\right),\,\,  
 z\in \Omega, \,\, 1 \leq i,j \leq m.$$
\end{corollary}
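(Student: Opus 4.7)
The plan is to recognize this corollary as the direct specialization $\alpha=\beta=1$ of Theorem \ref{limitq}, and then to compare the resulting right-hand side with the explicit formula for the curvature of a Cowen--Douglas operator given in the introduction. First, I would verify that the hypotheses of Theorem \ref{limitq} are in force: since $K$ is itself the reproducing kernel of the Hilbert space $(\mathcal H, K)$, it is a sesqui-analytic non-negative definite function on $\Omega\times \Omega$, so with $\alpha=\beta=1$ both ``$K^\alpha$'' and ``$K^\beta$'' equal the non-negative definite kernel $K$. In particular the tensor product $(\mathcal H, K)\otimes (\mathcal H, K)$, the submodule $\mathcal A_0$ of functions vanishing on the diagonal, and its reproducing kernel $K_{\mathcal A_0}$ are all defined, so Theorem \ref{limitq} applies.

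Next, I would substitute $\alpha=\beta=1$ into the conclusion of Theorem \ref{limitq}. The prefactor becomes $\tfrac{\alpha\beta}{\alpha+\beta}=\tfrac{1}{2}$ and the exponent $\alpha+\beta$ becomes $2$, yielding
\begin{equation*}
\lim_{\substack{{\zeta_i\to z_i}\\{\zeta_j\to z_j}}}
\left(\frac{K_{\mathcal A_0}(z,\zeta;z,\zeta)}{(z_i-\zeta_i)(\bar{z}_j-\bar{\zeta}_j)}\bigg|_{\zeta_l=z_l,l \neq i,j}\right)
= \tfrac{1}{2}\, K(z,z)^{2}\,\partial_i\bar{\partial}_j\log K(z,z).
\end{equation*}
Since $\boldsymbol T \in B_1(\Omega)$ is realized as $M^\ast$ on $(\mathcal H,K)$, the fibres of the associated holomorphic line bundle are spanned by $K(\cdot,\bar z)$ and are non-zero, so $K(z,z)=\|K(\cdot,\bar z)\|^2>0$ on $\Omega$. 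Thus division by $K(z,z)^2$ is legitimate.

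Finally, I would invoke the formula recorded just after the definition of the Cowen--Douglas class: for $\boldsymbol T \in B_1(\Omega)$ realized as $M^\ast$ on $(\mathcal H, K)$, the curvature matrix is given (in the sign convention used in this corollary) in terms of $\partial_i \bar\partial_j \log K(z,z)$. Isolating $\partial_i\bar{\partial}_j\log K(z,z)$ from the displayed identity above by multiplying through by $2/K(z,z)^2$ and substituting into the curvature formula yields the stated expression
\begin{equation*}
\mathcal K_{\boldsymbol T}(z)_{i,j} = \frac{2}{K(z,z)^2}\lim_{\substack{{\zeta_i\to z_i}\\{\zeta_j\to z_j}}}
\left(\frac{K_{\mathcal A_0}(z,\zeta;z,\zeta)}{(z_i-\zeta_i)(\bar{z}_j-\bar{\zeta}_j)}\bigg|_{\zeta_l=z_l,l \neq i,j}\right).
\end{equation*}

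Since all the real work has been absorbed into Theorem \ref{limitq}, there is no genuine obstacle; the only point requiring any care is bookkeeping of the $\tfrac{\alpha\beta}{\alpha+\beta}$ factor and the sign convention, which is why the factor $2$ appears. As noted in the corollary's statement, the Cowen--Douglas hypothesis on $\boldsymbol T$ is not actually needed for the analytic identity itself: the same argument, stopped one step earlier, produces the purely kernel-theoretic identity $K(z,z)^2 \partial_i\bar\partial_j\log K(z,z) = 2\,\lim(\cdots)$, which is the Gaussian curvature formula \eqref{Gauss} applied to the metric $K(z,z)^{-1}$ on $\Omega$.
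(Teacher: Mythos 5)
Your proposal is correct and follows exactly the paper's own route: the paper obtains this corollary by simply setting $\alpha=\beta=1$ in Theorem \ref{limitq}, so the limit equals $\tfrac12 K(z,z)^2\,\partial_i\bar\partial_j\log K(z,z)$ and multiplying by $2/K(z,z)^2$ isolates $\partial_i\bar\partial_j\log K(z,z)$. The one point worth flagging (which you already gesture at) is the sign convention: the introduction defines $\mathcal K_{\boldsymbol T}=-\big(\partial_i\bar\partial_j\log K(z,z)\big)_{i,j}$, so the displayed identity really produces the curvature only up to that sign, a discrepancy present in the paper's own statement rather than in your argument.
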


%\section{Decomposition of the tensor product of two Hilbert modules}
%Let $K_1$ and $K_2$ be two scalar valued non-negative definite kernels on $\Omega$. 
%%%%%%%%%%%%%%%%%%%%%%%%%%%%%%%%%%%%%%%%%%%%%%%%%%%%%%%%%%%
%%%%%%%%%%%%%%%%%%%%%%%%%%%%%%%%%%%%%%%%%%%%%%%%%%%%%%%%%%%


\begin{thebibliography}{10}

%%\bibitem{A-F-M}
%%G.~T. Adams, N.~S. Feldman, and P.~J. McGuire, \emph{Tridiagonal reproducing
%%  kernels and subnormality}, J. Operator Theory \textbf{70} (2013), no.~2,
%%  477--494. \MR{3138366}
%
%
\bibitem{pick-int}
J.~Agler and J.~E. McCarthy, Pick interpolation and Hilbert function spaces, Graduate Studies in
Mathematics, vol. 44, American Mathematical Society, Providence, RI, 2002.
%
%%\bibitem{A-Ch-1}
%%A.~Anand and S.~Chavan, \emph{Module tensor product of subnormal modules need
%%  not be subnormal}, J. Funct. Anal. \textbf{272} (2017), no.~11, 4752--4761.
%%  \MR{3630639}
%
%%\bibitem{A-Ch}
%%\bysame, \emph{A moment problem and joint {$q$}-isometry tuples}, Complex Anal.
%%  Oper. Theory \textbf{11} (2017), no.~4, 785--810. \MR{3626674}
%
\bibitem{Aro}
N.~Aronszajn, \emph{Theory of reproducing kernels}, Trans. Amer. Math. Soc.
  \textbf{68} (1950), 337--404. 
%%
%%\bibitem{At-1}
%%A.~Athavale, \emph{Holomorphic kernels and commuting operators}, Trans. Amer.
%%  Math. Soc. \textbf{304} (1987), no.~1, 101--110. \MR{906808}
%
%%\bibitem{At-2}
%%\bysame, \emph{On completely hyperexpansive operators}, Proc. Amer. Math. Soc.
%%  \textbf{124} (1996), no.~12, 3745--3752. \MR{1353373}
%
%%\bibitem{At-Ra}
%%A.~Athavale and A.~Ranjekar, \emph{Bernstein functions, complete
%%  hyperexpansivity and subnormality. {I}}, Integral Equations Operator Theory
%%  \textbf{43} (2002), no.~3, 253--263. \MR{1902948}
%%
%%\bibitem{B-S}
%%M.~Badri and P.~Szeptycki, \emph{Cauchy products of positive sequences},
%%  Proceedings of the {S}eventh {G}reat {P}lains {O}perator {T}heory {S}eminar
%%  ({L}awrence, {KS}, 1987), vol.~20, 1990, pp.~351--357. \MR{1065834}
%
%\bibitem{constantchar}
%B.~Bagchi and G.~Misra, \emph{Constant characteristic functions and homogeneous
%  operators}, J. Operator Theory \textbf{37} (1997),51--65.
%  
% 
%  
%  \bibitem{homoshift}
%  \bysame, \emph{The homogeneous shifts},
%    J. Funct. Anal. \textbf{204}(2003),  293--319.
%     
%    
%
%
\bibitem{infinitelydivisiblemetric}
S.~Biswas, D.~K. Keshari, and G.~Misra, \emph{Infinitely divisible metrics and
  curvature inequalities for operators in the {C}owen-{D}ouglas class}, J.
  Lond. Math. Soc. (2) \textbf{88} (2013), 941--956. 
%
% 
%
%\bibitem{onhomocontraction}
%\bysame, \emph{On homogeneous contractions and unitary representations of
%  {${\rm SU}(1,1)$}}, J. Operator Theory \textbf{30} (1993), 109--122.
%
%
%
%
\bibitem{CD}
M.~J. Cowen and R.~G. Douglas, \emph{Complex geometry and operator theory},
  Acta Math. \textbf{141} (1978), no.~3-4, 187--261. 
%
\bibitem{CDopen}
\bysame, \emph{Operators possessing an open set of eigenvalues}, Functions,
  series, operators, {V}ol. {I}, {II} ({B}udapest, 1980), Colloq. Math. Soc.
  J\'anos Bolyai, vol.~35, North-Holland, Amsterdam, (1983), 323--341.
%  



\bibitem{Curtosalinas}
R.~E. Curto and N.~Salinas, \emph{Generalized bergman kernels and the
  cowen-douglas theory}, American Journal of Mathematics \textbf{106} (1984), 447--488.
%
\bibitem{JPD}
J.-P. Demailly, \emph{Complex Analytic and Differential Geometry}, Universite de Grenoble, 2007. http://www-fourier.ujf-grenoble.fr/\textasciitilde demailly/manuscripts/agbook.pdf 


\bibitem{DM}
R.~G. Douglas and G.~Misra, \emph{Some calculations for {H}ilbert modules}, J. Orissa Math. Soc., \textbf{12-15} (1993-96), 75--85.

\bibitem{DMQ}
\bysame, \emph{On quotient modules}, In:
Recent advances in operator theory and related topics (Szeged, 1999), 203--209, Oper. Theory Adv. Appl., 127, Birkhäuser, Basel, 2001.


%\bibitem{equivofquotient}
%R.~G. Douglas and G.~Misra, \emph{Equivalence of quotient {H}ilbert modules},
%  Proc. Indian Acad. Sci. Math. Sci. \textbf{113} (2003), 281--291.
%
%
\bibitem{DMS}
R.~G. Douglas, G.~Misra and J. Sarkar, \emph{Contractive {H}ilbert modules and their dilations}, Israel J. Math. \textbf{187} (2012), 141--165

\bibitem{DMV}
R. G. Douglas, G. Misra and C. Varughese, \emph{On quotient modules - the case of arbitrary multiplicity}, J. Funct. Anal. 174 (2000), 364--398.

%\bibitem{D-M-V}
%R.~G. Douglas, G.~Misra, and C.~Varughese, \emph{On quotient modules---the case
%  of arbitrary multiplicity}, J. Funct. Anal. \textbf{174} (2000), 364--398. 
%
%\bibitem{Douglasmodule}
%R.~G. Douglas and V.~I. Paulsen, \emph{Hilbert modules over function algebras}, Longman Sc \& Tech, 1989.
%
%\bibitem{Wallachset}
%J.~Faraut and A.~Kor\'anyi, \emph{Function spaces and reproducing kernels on
%  bounded symmetric domains}, J. Funct. Anal. \textbf{88} (1990), 64--89. 
%
%\bibitem{Ferguson-Rochberg}
%S.~H. Ferguson and R.~Rochberg, \emph{Higher order {H}ilbert-{S}chmidt {H}ankel forms and tensors of analytic kernels}, Math. Scand. \textbf{96} (2005),  117--146.
%

\bibitem{GM}
S. Ghara and G. Misra, \emph{Decomposition of the tensor product of two Hilbert modules}, Operator theory, operator algebras and their interactions with  geometry and topology -- {R}onald {G}. {D}ouglas memorial volume, Birkh\"{a}user/Springer, Cham (2020), 221--265
%\bibitem{SGhara}
%S. ~Ghara,  {\em  Decomposition of the tensor product of Hilbert modules via the jet construction and weakly homogeneous operators}, PhD thesis, Indian Institute of  Science, 2018.
%
%%\bibitem{openheim}
%%M.~G\"unther and L.~Klotz, \emph{Schur's theorem for a block {H}adamard
%%  product}, Linear Algebra Appl. \textbf{437} (2012), 948--956.
%%  \MR{2921747}
%
%\bibitem{HLZ}
%S. ~Hwang, Y. ~Liu, and G. ~Zhang, \emph{Hilbert spaces of tensor-valued holomorphic functions on  the  unit ball of  $\mathbb C^n$}, 
%Pac. J. Math. \textbf{214} (2004), 303  - 322. 
%
%\bibitem{Flagstructure}
%K.~Ji, C.~Jiang, D.~K. Keshari, and G.~Misra, \emph{Rigidity of the flag
%  structure for a class of {C}owen-{D}ouglas operators}, J. Funct. Anal.
%  \textbf{272} (2017), 2899--2932. 
%
%
%\bibitem{MisraKoranyihomog}
%A.~Kor\'anyi and G.~Misra, \emph{Homogeneous operators on {H}ilbert spaces of
%  holomorphic functions}, J. Funct. Anal. \textbf{254} (2008), 2419--2436. 
%
%\bibitem{Krantz}
%S.~G. Krantz, \emph{Function theory of several complex variables}, second ed.,
%  The Wadsworth \& Brooks/Cole Mathematics Series, Wadsworth \& Brooks/Cole
%  Advanced Books \& Software, Pacific Grove, CA, 1992. 
%
%\bibitem{JP}
%J. ~Peetre, \emph{Reproducing formulae for holomorphic tensor fields}, 
%Boll. Un. Mat. Ital. B \textbf{(7)  2}  (1988),  345–359. 
%
\bibitem{curvandbackwardshift}
G.~Misra, \emph{Curvature and the backward shift operators}, Proc. Amer. Math.
  Soc. \textbf{91} (1984), 105--107. 
%
\bibitem{GMCI}
\bysame, \emph{Curvature inequalities and extremal properties of bundle
  shifts}, J. Operator Theory \textbf{11} (1984),  305--317. 
%
%
%\bibitem{Misra-Sastry}
%G.~Misra and N.~S.~N. Sastry, \emph{Homogeneous tuples of operators and
%  representations of some classical groups}, J. Operator Theory \textbf{24}
%  (1990), 23--32.
%
%
%\bibitem{Misra-Upmeier}
%G.~Misra and H. Upmeier, \emph{Homogeneous vector bundles and intertwining operators for symmetric domains}, Adv. Math. \textbf{303} (2016), 1077--1121.
%
\bibitem{PaulsenRaghupati}
 V.~I. Paulsen and M.~Raghupathi, \emph{An introduction to the theory of
  reproducing kernel {H}ilbert spaces}, Cambridge Studies in Advanced
  Mathematics \textbf{152}, Cambridge University Press, Cambridge, (2016).

\bibitem{Ramiz}
Md. R. Reza, \emph{Curvature inequalities for operators in the Cowen-Douglas class of a planar domain}, Indiana Univ. Math. J. \textbf{67} (2018),  1255--1279.

\bibitem{Kai}
K. Wang and  G. Zhang, \emph{Curvature inequalities for operators of the Cowen-Douglas class}, Israel J. Math. \textbf{222} (2017), 279--296.

%  
%
%\bibitem{Rudinunitball}
%W.~Rudin, \emph{Function theory in the unit ball of {${\bf C}^{n}$}},
%  Grundlehren der Mathematischen Wissenschaften [Fundamental Principles of
%  Mathematical Science] \textbf{241}, Springer-Verlag, New York-Berlin (1980).
%
%\bibitem{Sal}
%N.~Salinas, \emph{Products of kernel functions and module tensor products},
%  Topics in operator theory, Oper. Theory Adv. Appl. \textbf{32}, Birkh\"auser,   Basel (1988), 219--241. 
%
%
%
%
%
%
\end{thebibliography}
\end{document}